   \newcommand{\ds}{\displaystyle}
\begin{document}
 \title{Understanding high-index saddle dynamics via numerical analysis\thanks{Received date, and accepted date (The correct dates will be entered by the editor).}}


          \author{Lei Zhang\thanks{Beijing International Center for Mathematical Research, Center
for Machine Learning Research, Center for Quantitative Biology, Peking University, Beijing, 100871, China (zhangl@math.pku.edu.cn)}
          \and Pingwen Zhang\thanks{School of Mathematics and Statistics, Wuhan University,
Wuhan, 430072, China; School of Mathematical Sciences, Laboratory of Mathematics
and Applied Mathematics, Peking University, Beijing, 100871, China
 (pzhang@pku.edu.cn)}
 \and Xiangcheng Zheng\thanks{School of Mathematics, Shandong University, Jinan, 250100,
China (xzheng@sdu.edu.cn)}}

         \pagestyle{myheadings} \markboth{Understanding HiSD via numerical analysis}{Zhang, Zhang and Zheng} \maketitle

          \begin{abstract}
             High-index saddle dynamics (HiSD) serves as a competitive instrument in searching the any-index saddle points and constructing the solution landscape of complex systems. The Lagrangian multiplier terms in HiSD ensure the Stiefel manifold constraint, which, however, are dropped in the commonly-used discrete HiSD scheme and are replaced by an additional Gram-Schmidt orthonormalization. Though this scheme has been successfully applied in various fields, it is still unclear why the above modification does not affect its effectiveness. We recover the same form as HiSD from this scheme, which not only leads to error estimates naturally, but indicates that the mechanism of Stiefel manifold preservation by Lagrangian multiplier terms in HiSD is nearly a Gram-Schmidt process (such that the above modification is appropriate). The developed methods are further extended to analyze the more complicated constrained HiSD on high-dimensional sphere, which reveals more mechanisms of the constrained HiSD in preserving several manifold properties.
          \end{abstract}
\begin{keywords} saddle point; saddle dynamics; solution landscape;  error estimate; manifold property
\end{keywords}

 \begin{AMS} 37N30; 37M21
\end{AMS}
      
      \section{Introduction}
Searching saddle points on a complicated energy landscape is a hot but challenging topic in computational physical and chemistry \cite{EV2010,Han2019transition,Mehta,wang2021modeling,
Yin2020nucleation,npj2016}. The saddle points can be classified by the (Morse) index, which, according to the Morse theory \cite{Milnor}, are characterized by the maximal dimension of a subspace on which the Hessian is negative definite. 
There exist extensive searching algorithms for saddle points \cite{Doye,EZho,Farr,Gao,Gou,Gra,Lev,Li2001,LiZho,LiuXie,Xie,
ZhaDu,ZhaSISC}. 
This work focuses on a high-index saddle dynamics (HiSD) approach \cite{YinSISC} for finding an index-$k$ saddle point of the energy functional $E(x)$ and constructing solution landscapes \cite{Han2021,YinPRL,YinSCM,YuZhaZha}
\begin{equation}\label{sadk}
\left\{
\begin{array}{l}
\ds \frac{dx}{dt} =\mathcal S(t),\\[0.1in]
\ds \frac{dv_i}{dt}=\mathcal R_i(t)+\mathcal L_i(t),    ~~1\leq i\leq k,
\end{array}
\right.
\end{equation}
where
\begin{equation}\label{sadkaux}
\left\{
\begin{array}{l}
\ds \mathcal S(t):=\beta\bigg(I -2\sum_{j=1}^k v_jv_j^\top \bigg)F(x),\\[0.175in]
\ds  \mathcal R_i(t):=  \gamma J(x)v_i,\\[0.075in]
\ds \mathcal L_i(t):= \gamma\bigg( -v_iv_i^\top-2\sum_{j=1}^{i-1}v_jv_j^\top\bigg)J(x)v_i.
\end{array}
\right.
\end{equation}
Here $x\in\mathbb R^d$ represents the state variable, $v_i (i=1,\cdots,k)$ are $k$ directional variables constructing the unstable subspace of the target saddle point, $F(x)=-\nabla E(x)$, $J(x)=-\nabla^2 E(x)$, and $\beta$, $\gamma>0$ are relaxation parameters. 
It is shown in \cite{YinSISC} that a linearly stable steady state of (\ref{sadk}) is an index-$k$ saddle
point. From the original derivations of HiSD in \cite{YinSISC}, the $\mathcal R_i(t)$ arises from minimizing the Rayleigh quotient, while $\mathcal L_i(t)$ is introduced via the Lagrangian multiplier method to ensure the Stiefel manifold constraint, that is, the orthonormality of directional vectors $\{v_i(t)\}_{i=1}^k$ for any $t>0$, provided that the initial values $\{v_i(0)\}_{i=1}^k$ are orthonormal.
\subsection{Motivation}
 An efficient algorithm for HiSD is developed in  \cite{YinSCM} with numerical solutions $\{x_n\}_{n=1}^N$ and $\{v_{i,n}\}_{i=1,n=1}^{k,N}$ 
 \begin{equation}\label{FDsadk}
\left\{
\begin{array}{l}
\ds x_n=x_{n-1}+\tau \mathcal S^{n-1},\\[0.1in]
\ds \tilde v_{i,n}=v_{i,n-1}+\tau\mathcal R_i^{n-1},~~1\leq i\leq k,\\[0.1in]
\ds  v_{i,n}=\text{GramSchmidt}(v_{1,n},
\cdots,v_{i-1,n};\tilde v_{i,n}),~~1\leq i\leq k,
\end{array}
\right.
\end{equation}
equipped with the initial state $x_0$ and orthonormal initial directional vectors $\{v_{i,0}\}_{i=1}^k$, where
 \begin{equation}\label{FDsadkaux}
 \begin{array}{l}
\ds \mathcal S^{n-1}:=\beta\bigg(I -2\sum_{j=1}^k v_{j,n-1}v_{j,n-1}^\top \bigg)F(x_{n-1}),\\[0.175in]
\ds \mathcal R_i^{n-1}:=\gamma J(x_{n-1})v_{i,n-1}.
\end{array}
\end{equation}
We observe that the Lagrangian multiplier terms in $\mathcal L_i$ in HiSD are dropped in this scheme and the Gram-Schmidt orthonormalization is thus critical to enforce the Stiefel manifold constraint.

A related work \cite{Z3} analyzes the algorithm (\ref{FDsadk}) with the second scheme replaced by 
\begin{equation}\label{FDmh}
\begin{array}{l}
\ds\tilde v_{i,n}=v_{i,n-1}+\tau\mathcal R_i^{n-1}+\tau\mathcal L_i^{n-1}\text{ where}\\[0.05in]
\ds\mathcal L_i^{n-1}:=\gamma\bigg( -v_{i,n-1}v_{i,n-1}^\top-2\sum_{j=1}^{i-1}v_{j,n-1}v_{j,n-1}^\top\bigg)J(x_{n-1})v_{i,n-1},
\end{array}
\end{equation}
which leads to the scheme proposed in the original work \cite{YinSISC}. In comparison with (\ref{FDsadk}), the Lagrangian multiplier terms in $\mathcal L_i$ in HiSD are reserved such that (\ref{FDmh}) is the exact discretization of the equation of $v_i$ in (\ref{sadk}) and the Gram-Schmidt orthonormalization serves as a perturbation that retracts the dynamics of directional vectors to the Stiefel manifold. For this reason, a perturbation analysis is carried out in \cite{Z3} to perform error estimates, which ensures that the the numerical scheme evolves along the dynamical pathway of continuous HiSD such that the numerical scheme also converges to the same target saddle point of HiSD. Other numerical treatments such as the projection methods for differential equations on manifolds \cite{Haibook}, which project the dynamics of directional vectors back to the Stiefel manifold at each time step, could also be applied with error estimates derived from the conclusions in \cite{Haibook}.

However, numerical analysis for the scheme (\ref{FDsadk}) could not follow the aforementioned ones since the discrete dynamics of directional vectors is not consistent with its continuous analogues. Due to the loss of Lagrangian multiplier terms in (\ref{FDsadk}), the Gram-Schmidt orthonormalization in (\ref{FDsadk}) is no longer a perturbation or projection but may impose a substantial adjustment to enforce the Stiefel manifold constraint as the Lagrangian multiplier terms do in the continuous HiSD. In order to understand the effectiveness of the scheme shown in \cite{YinSCM} and ensure its convergence to the same target saddle point as continuous HiSD, it is natural to investigate whether these modifications in numerical discretization change the mechanisms of HiSD in preserving manifold properties and to what extent deviate the numerical solutions from the latent trajectory of HiSD. 

\subsection{Contribution}\label{sec1.2}
The main contributions of this work are enumerated to address the aforementioned issues:
\begin{itemize}
 \item[\textbf{(i)}] We prove that the dynamics of directional vectors in (\ref{sadk}) could be recovered from the superposition of the discrete dynamics of minimizing the Rayleigh quotient and the Gram-Schmidt orthonormalization, i.e. the second and the third equations in (\ref{FDsadk}), respectively, with the error of order $O(\tau)$ (cf. Theorem \ref{thmvv}). Several novel splittings such as (\ref{split}) and the subsequent estimates of (\ref{mhbest}) are proposed to explore the hidden structures of the Gram-Schmidt process and to gradually get over the nonlinearity and coupling.   This result not only reduces the error estimate of (\ref{FDsadk}) to that for standard system of differential equations, but reveals that the mechanism of Stiefel manifold preservation in HiSD is close to the Gram-Schmidt process, which improves the understanding of HiSD via numerical analysis.

\item[\textbf{(ii)}] We extend the results for the constrained HiSD on the unit sphere $S^{d-1}$ \cite{ZhaDuJCP,CHiSD2021}, which has been successfully applied in computing constrained saddle points of, e.g. the Bose-Einstein condensation \cite{Bao,BaoDu} 
\begin{equation}\label{csadk}
\left\{
\begin{array}{l}
\ds \frac{dx}{dt} =\mathcal S(t)  -xx^\top F(x),\\[0.1in]
\ds \frac{dv_i}{dt}=\mathcal R_i(t)+\mathcal L_i(t)  -xx^\top J(x)v_i+ xv_i^\top F(x),~~1\leq i\leq k,
\end{array}
\right.
\end{equation}
where $\mathcal S$, $\mathcal R_i$ and $\mathcal L_i$ are defined as before with relaxation parameters $\beta=\gamma=1$ for simplicity.
Specifically, \textbf{(a)} we prove that the dynamics of the state variable in (\ref{csadk}) could be recovered from the superposition of the discrete unconstrained discrete gradient dynamics (i.e. the first scheme of (\ref{FDsadk})) and the retraction via the vector normalization, while \textbf{(b)} the dynamics of directional vectors could be recovered from the superposition of the discrete dynamics of minimizing the Rayleigh quotient, the vector transport and the Gram-Schmidt orthonormalization, with the error of order $O(\tau)$ (cf. Theorem \ref{thmc}).  Similar to \textbf{(i)},  these results could significantly simplify the error estimate of (\ref{FDsadk}) and, more importantly, reveal that the mechanisms of the constrained HiSD on preserving several manifold properties (\ref{prop}) are close to the simple operations such as the vector normalization, the vector transport and the Gram-Schmidt orthonormalization.
\end{itemize}

\section{Recovery of HiSD}
The main purpose of this section is to prove that the dynamics of directional vectors could be recovered by combining the second and the third equations in the scheme (\ref{FDsadk}), except for high-order perturbations. This result not only demonstrates the statements in \textbf{(i)}, but will facilitate error estimates in subsequent sections. 

We make the assumptions following \cite{Z3,Z3c}:

\noindent\textbf{Assumption A:} There exists a constant $L>0$ such that the following linearly growth and Lipschitz conditions hold under the standard $l^2$ norm $\|\cdot\|$ of a vector or a matrix
$$\begin{array}{c}
\ds \|J(x_2)-J(x_1)\|+\|F(x_2)-F(x_1)\|\leq L\|x_2-x_1\|,\\[0.1in]
\ds\|F(x)\|\leq L(1+\|x\|),~~x,x_1,x_2\in \mathbb R^d.
\end{array}  $$
It is shown in \cite{Z3} that, under the Assumption A, $\|x_n\|$ is bounded by some fixed constant for $1\leq n\leq N$, which, based on the scheme of $\tilde v_{i,n}$ in (\ref{FDsadk}), implies the boundedness of $\|\tilde v_{i,n}\|$. Furthermore, according to the formula of the Gram-Schmidt procedure, the third equation of (\ref{FDsadk}) could be written in a clearer manner
\begin{equation}\label{vgs}
v_{i,n}=\frac{1}{Y_{i,n}}\bigg(\ds\tilde v_{i,n}-\sum_{j=1}^{i-1}(\tilde v_{i,n}^\top v_{j,n})v_{j,n}\bigg)
\end{equation}
where 
\begin{equation}\label{Yv}
\ds Y_{i,n}:=\bigg\|\tilde v_{i,n}-\sum_{j=1}^{i-1}(\tilde v_{i,n}^\top v_{j,n})v_{j,n}\bigg\|=\bigg(\|\tilde v_{i,n}\|^2-\sum_{j=1}^{i-1}(\tilde v_{i,n}^\top v_{j,n})^2\bigg)^{1/2}.
\end{equation} 
This explicit formula will be frequently used as the third equation of (\ref{FDsadk}) in the following derivations.

We first prove a preliminary estimate for the difference $v_{i,n}-v_{i,n-1}$ for future use. Throughout the paper we use $Q$ to denote a generic positive constant that may assume difficult values at different occurrences.

\begin{lemma}\label{lemvv}
For $\tau$ small enough the following estimate holds for $1\leq i\leq k$ and $1\leq n\leq N$
\begin{equation}\label{vnvn1}
 \|v_{i,n}-v_{i,n-1}\|\leq  Q\tau.
 \end{equation}
Here $Q$ is independent from $i$, $\tau$ and $N$.
\end{lemma}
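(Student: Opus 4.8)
The plan is to estimate $\|v_{i,n}-v_{i,n-1}\|$ by induction on $i$, exploiting the explicit Gram--Schmidt formula \eqref{vgs}. First I would record the ingredients that follow from Assumption A and the cited boundedness results: since $\|x_n\|$ and hence $\|v_{i,n}\|=1$, $\|\tilde v_{i,n}\|$, $\|J(x_n)\|$, $\|F(x_n)\|$ are all bounded by a fixed constant, and since $\tilde v_{i,n}=v_{i,n-1}+\tau\gamma J(x_{n-1})v_{i,n-1}$, we immediately get $\|\tilde v_{i,n}-v_{i,n-1}\|\leq Q\tau$. In particular, for $i=1$ we have $v_{1,n}=\tilde v_{1,n}/\|\tilde v_{1,n}\|$, and since normalization is Lipschitz away from $0$ (and $\|\tilde v_{1,n}\|\to 1$ as $\tau\to 0$, so it is bounded below by $1/2$ for $\tau$ small), $\|v_{1,n}-v_{1,n-1}\|\leq \|v_{1,n}-\tilde v_{1,n}\|+\|\tilde v_{1,n}-v_{1,n-1}\|\leq Q\tau$, establishing the base case.

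For the inductive step, assume $\|v_{j,n}-v_{j,n-1}\|\leq Q\tau$ for all $j<i$. Using \eqref{vgs} at steps $n$ and $n-1$, I would write
\[
v_{i,n}-v_{i,n-1}=\frac{1}{Y_{i,n}}w_{i,n}-\frac{1}{Y_{i,n-1}}w_{i,n-1},\qquad w_{i,n}:=\tilde v_{i,n}-\sum_{j=1}^{i-1}(\tilde v_{i,n}^\top v_{j,n})v_{j,n},
\]
and split this difference in the standard telescoping way into $\frac{1}{Y_{i,n}}(w_{i,n}-w_{i,n-1})+\big(\frac{1}{Y_{i,n}}-\frac{1}{Y_{i,n-1}}\big)w_{i,n-1}$. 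The numerator difference $w_{i,n}-w_{i,n-1}$ is controlled by bounding $\|\tilde v_{i,n}-\tilde v_{i,n-1}\|\leq Q\tau$ together with the inductive bounds $\|v_{j,n}-v_{j,n-1}\|\leq Q\tau$ — each summand $(\tilde v_{i,n}^\top v_{j,n})v_{j,n}-(\tilde v_{i,n-1}^\top v_{j,n-1})v_{j,n-1}$ is handled by adding and subtracting mixed terms, using boundedness of all factors. For the $1/Y$ difference I would note that $Y_{i,n}$ is bounded below away from zero for small $\tau$: indeed $Y_{i,n}^2=\|\tilde v_{i,n}\|^2-\sum_{j<i}(\tilde v_{i,n}^\top v_{j,n})^2$, and since $\tilde v_{i,n}=v_{i,n-1}+O(\tau)$ with $\{v_{j,n-1}\}$ orthonormal (which itself needs to be known — it follows from the Gram--Schmidt construction at step $n-1$, or can be folded into the same induction), one gets $Y_{i,n}^2=1+O(\tau)\geq 1/2$; then $|Y_{i,n}-Y_{i,n-1}|\leq |Y_{i,n}^2-Y_{i,n-1}^2|/(Y_{i,n}+Y_{i,n-1})$ and the squared quantities differ by $Q\tau$ via the same bounds as above, so $|1/Y_{i,n}-1/Y_{i,n-1}|\leq Q\tau$. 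Combining gives $\|v_{i,n}-v_{i,n-1}\|\leq Q\tau$, closing the induction.

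The main obstacle I anticipate is bookkeeping the dependence of the generic constant $Q$ on the index $i$: a naive induction lets $Q$ grow with $i$ (each level multiplies by a factor depending on bounds from the previous levels), whereas the lemma asserts $Q$ is independent of $i$. To get uniformity in $i$ one must observe that $k$ is fixed and finite, so even an $i$-dependent constant is bounded by its value at $i=k$; alternatively, one sets up the induction so that at each level the increment to the constant is uniformly bounded, using the orthonormality $\|v_{j,n}\|=1$ to prevent the factors from compounding. Either way the argument is elementary but the careful tracking of which quantities are already known to be $O(\tau)$-close versus merely bounded, and ensuring $Y_{i,n}$ stays bounded below uniformly in $i,n$ for all $\tau$ below a single threshold, is where the care is needed; everything else is routine Lipschitz estimation.
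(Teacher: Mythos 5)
Your base case $i=1$ is fine and matches the paper's argument. The inductive step, however, rests on a decomposition that does not close, and the gap is genuine. You compare the Gram--Schmidt outputs at two consecutive time levels, writing $v_{i,n}-v_{i,n-1}=\frac{1}{Y_{i,n}}w_{i,n}-\frac{1}{Y_{i,n-1}}w_{i,n-1}$, and then assert $\|\tilde v_{i,n}-\tilde v_{i,n-1}\|\leq Q\tau$. That assertion is circular: since $\tilde v_{i,n}=v_{i,n-1}+\tau\gamma J(x_{n-1})v_{i,n-1}$, the difference $\tilde v_{i,n}-\tilde v_{i,n-1}$ equals $v_{i,n-1}-v_{i,n-2}$ up to $O(\tau)$ corrections, i.e.\ it is exactly the quantity the lemma asserts, one time level earlier, for the \emph{same} index $i$ --- which your induction on $i$ alone does not supply. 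If you repair this by inducting on $n$ as well, with $a_n:=\|v_{i,n}-v_{i,n-1}\|$, your splitting produces a recursion $a_n\leq(1+Q\tau)a_{n-1}+(\text{remainder})$ whose remainder is only $O(\tau)$, not $O(\tau^2)$: in the summand $(\tilde v_{i,n}^\top v_{j,n})v_{j,n}-(\tilde v_{i,n-1}^\top v_{j,n-1})v_{j,n-1}$ the coefficient difference contains $v_{i,n-1}^\top(v_{j,n}-v_{j,n-1})-v_{i,n-2}^\top(v_{j,n-1}-v_{j,n-2})$, each term of which is $O(\tau)$ by the $j<i$ hypothesis but whose \emph{difference} is not $O(\tau^2)$ without a discrete second-difference bound stronger than the lemma itself. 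An $O(\tau)$ remainder accumulated over $N=O(1/\tau)$ steps yields only $a_n=O(1)$.

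The fix is to avoid any comparison across time levels: $v_{i,n-1}$ already sits inside the Gram--Schmidt formula (\ref{vgs}) at level $n$, because $\tilde v_{i,n}=v_{i,n-1}+\tau\gamma J(x_{n-1})v_{i,n-1}$. Substituting this and using the orthonormality $v_{i,n-1}^\top v_{j,n-1}=0$, so that $v_{i,n-1}^\top v_{j,n}=v_{i,n-1}^\top(v_{j,n}-v_{j,n-1})$ is controlled by the $j<i$ hypothesis at the single level $n$, gives
\begin{equation*}
v_{i,n}-v_{i,n-1}=\frac{1}{Y_{i,n}}\bigg(v_{i,n-1}(1-Y_{i,n})-\sum_{j=1}^{i-1}\big(v_{i,n-1}^\top(v_{j,n}-v_{j,n-1})\big)v_{j,n}+O(\tau)\bigg),
\end{equation*}
together with $|1-Y_{i,n}|\leq|1-Y_{i,n}^2|\leq Q\big(\sum_{j=1}^{i-1}\|v_{j,n}-v_{j,n-1}\|^2+\tau\big)$ by the same substitution. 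This closes the induction on $i$ alone, with constants growing only through the $k$ levels; your remark that uniformity in $i$ follows from the finiteness of $k$ is correct and is exactly how the paper handles it, and your lower bound $Y_{i,n}\geq 1/2$ is also fine. It is only the cross-time-level telescoping that has to go.
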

\begin{proof}
We first prove the conclusion for $i=1$. From the second and the third equations of (\ref{FDsadk}) with $i=1$ we obtain
$$\begin{array}{l}
\ds v_{1,n}=\frac{\tilde v_{1,n}}{\|\tilde v_{1,n}\|}=\tilde v_{1,n}+\frac{\tilde v_{1,n}}{\|\tilde v_{1,n}\|}(1-\|\tilde v_{1,n}\|)\\
\ds\qquad=v_{1,n-1}+\tau\gamma J(x_{n-1})v_{1,n-1}+\frac{\tilde v_{1,n}}{\|\tilde v_{1,n}\|}(1-\|\tilde v_{1,n}\|),
\end{array}  $$
which implies
$$
\ds \|v_{1,n}-v_{1,n-1}\|\leq Q\tau+|1-\|\tilde v_{1,n}\||.
 $$
We incorporate this with
$$\|\tilde v_{1,n}\|=\|v_{1,n-1}+\tau\gamma J(x_{n-1})v_{1,n-1}\|=1+O(\tau) $$
to get $
 \|v_{i,n}-v_{i,n-1}\|\leq  Q_1\tau
 $
for some positive constant $Q_1$. Then we assume that 
\begin{equation}\label{hyp}
 \|v_{j,n}-v_{j,n-1}\|\leq  Q_j\tau
 \end{equation}
for $1\leq j\leq i-1$ for some $1\leq i\leq k$ and for some positive constants $Q_1,\cdots,Q_{i-1}$. We intend to prove that
$$
 \|v_{i,n}-v_{i,n-1}\|\leq  Q_i\tau
$$
for some positive constant $Q_i$. Here $Q_i$ could be greater than $Q_1,\cdots,Q_{i-1}$.
We invoke the second equation of (\ref{FDsadk}) in the third equation of (\ref{FDsadk}) to obtain
\begin{equation}\label{vexp}
\begin{array}{l}
\ds v_{i,n}=\frac{1}{Y_{i,n}}\bigg(v_{i,n-1}+\tau\gamma J(x_{n-1})v_{i,n-1}-\sum_{j=1}^{i-1}(v_{i,n-1}^\top v_{j,n})v_{j,n}\\
\ds\qquad\qquad-\tau\gamma\sum_{j=1}^{i-1}(v_{i,n-1}^\top J(x_{n-1})^\top v_{j,n})v_{j,n}\bigg).
\end{array}
\end{equation}
We apply $v_{i,n-1}^\top v_{j,n-1}=0$ to obtain
\begin{equation}
\begin{array}{l}
\ds v_{i,n}-v_{i,n-1}=\frac{1}{Y_{i,n}}\bigg(v_{i,n-1}(1-Y_{i,n})+\tau\gamma J(x_{n-1})v_{i,n-1}\\
\ds\qquad-\sum_{j=1}^{i-1}\big(v_{i,n-1}^\top (v_{j,n}-v_{j,n-1})\big)v_{j,n}-\tau\gamma\sum_{j=1}^{i-1}(v_{i,n-1}^\top J(x_{n-1})^\top v_{j,n})v_{j,n}\bigg),
\end{array}
\end{equation}
which leads to
\begin{equation}\label{v-v}
\begin{array}{l}
\ds \|v_{i,n}-v_{i,n-1}\|\leq\frac{1}{Y_{i,n}}\bigg(|1-Y_{i,n}|+\sum_{j=1}^{i-1}\|v_{j,n}-v_{j,n-1}\|+Q\tau\bigg).
\end{array}
\end{equation}
As for $\tau$ small enough
\begin{equation}\label{Y} \begin{array}{l}
\ds Y_{i,n}=\bigg(\|\tilde v_{i,n}\|^2-\sum_{j=1}^{i-1}(\tilde v_{i,n}^\top v_{j,n})^2\bigg)^{1/2}\\
\ds\qquad=\bigg(\|v_{i,n-1}+\tau\gamma J(x_{n-1})v_{i,n-1}\|^2\\
\ds\qquad\quad-\sum_{j=1}^{i-1}\big((v_{i,n-1}+\tau\gamma J(x_{n-1})v_{i,n-1})^\top v_{j,n}\big)^2\bigg)^{1/2}\\
\ds\qquad=\bigg(1+2\tau\gamma v_{i,n-1}^\top J(x_{n-1})v_{i,n-1}+O(\tau^2)\\
\ds\qquad\quad-\sum_{j=1}^{i-1}\big(v_{i,n-1}^\top (v_{j,n}-v_{j,n-1})+\tau\gamma v_{i,n-1}^\top J(x_{n-1})^\top v_{j,n}\big)^2\bigg)^{1/2}\\
\ds\qquad\in \bigg[1\pm Q\bigg(\sum_{j=1}^{i-1} \|v_{j,n}-v_{j,n-1}\|^2+\tau\bigg)\bigg]^{1/2},
\end{array} \end{equation}
we obtain
\begin{equation}\label{1-Y}
|1-Y_{i,n}|\leq |1-Y_{i,n}^2|\leq Q\bigg(\sum_{j=1}^{i-1} \|v_{j,n}-v_{j,n-1}\|^2+\tau\bigg).
\end{equation}
We incorporate this estimate with (\ref{v-v}) to obtain
\begin{equation}\label{v-v2}
 \ds \|v_{i,n}-v_{i,n-1}\|\leq\frac{\ds Q\sum_{j=1}^{i-1} \|v_{j,n}-v_{j,n-1}\|^2+\sum_{j=1}^{i-1}\|v_{j,n}-v_{j,n-1}\|+Q\tau}{\ds\bigg(1-Q\bigg(\sum_{j=1}^{i-1} \|v_{j,n}-v_{j,n-1}\|^2+\tau\bigg)\bigg)^{1/2}}, 
 \end{equation}
which, together with the hypothesis (\ref{hyp}), leads to  
 \begin{equation*}
 \ds \|v_{i,n}-v_{i,n-1}\|\leq\frac{ Q\tau^2+Q\tau}{\ds\big(1-Q(\tau^2+\tau)\big)^{1/2}}\leq Q_i\tau. 
 \end{equation*}
 Thus we obtain (\ref{hyp}) for $j=i$, which completes the induction procedure. Then we select $Q$ in (\ref{vnvn1}) as $\max\{Q_1,\cdots,Q_k\}$ to complete the proof.
\end{proof}

We then prove the main theorem of this section.
\begin{theorem}\label{thmvv}
For $\tau$ small enough, combining the second and the third equations in (\ref{FDsadk}), which correspond to the discrete dynamics of minimizing the Rayleigh quotient and the Gram-Schmidt procedure, respectively, leads to the discrete dynamics of directional vectors in (\ref{sadk}) for $1\leq n\leq N$ and $1\leq i\leq k$
\begin{equation}\label{ee1} 
\begin{array}{l}
 \ds \frac{v_{i,n}-v_{i,n-1}}{\tau}=\mathcal R_i^{n-1}+\mathcal L_i^{n-1}+O(\tau).
\end{array}  
\end{equation}
\end{theorem}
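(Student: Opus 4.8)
The plan is to prove \eqref{ee1} by induction on the directional index $i$, working from the explicit Gram--Schmidt formula \eqref{vgs}--\eqref{vexp} and using Lemma~\ref{lemvv} to keep every difference $v_{j,n}-v_{j,n-1}$ under control. The organizing idea is to split $v_{i,n}-v_{i,n-1}$ into a \emph{normalization} contribution, coming from the scalar factor $1/Y_{i,n}$, and a \emph{Gram--Schmidt} contribution, coming from the subtracted projections $\sum_{j<i}(\tilde v_{i,n}^\top v_{j,n})v_{j,n}$; one then expands each to first order in $\tau$ and matches the surviving $O(\tau)$ terms against $\mathcal R_i^{n-1}$ and the two pieces of $\mathcal L_i^{n-1}$. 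Observe that \eqref{ee1} is equivalent to the sharper-looking statement $v_{i,n}-v_{i,n-1}=\tau\big(\mathcal R_i^{n-1}+\mathcal L_i^{n-1}\big)+O(\tau^2)$ uniformly in $n$, and it is this $O(\tau^2)$ form of the lower-index estimates that the induction actually consumes.

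For the base case $i=1$ one has $v_{1,n}=\tilde v_{1,n}/\|\tilde v_{1,n}\|$ with $\tilde v_{1,n}=v_{1,n-1}+\tau\gamma J(x_{n-1})v_{1,n-1}$; from $\|v_{1,n-1}\|=1$ one gets $\|\tilde v_{1,n}\|^2=1+2\tau\gamma v_{1,n-1}^\top J(x_{n-1})v_{1,n-1}+O(\tau^2)$, hence $1/\|\tilde v_{1,n}\|=1-\tau\gamma v_{1,n-1}^\top J(x_{n-1})v_{1,n-1}+O(\tau^2)$, and multiplying out yields $v_{1,n}-v_{1,n-1}=\tau\gamma J(x_{n-1})v_{1,n-1}-\tau\gamma v_{1,n-1}v_{1,n-1}^\top J(x_{n-1})v_{1,n-1}+O(\tau^2)$, which is exactly $\tau(\mathcal R_1^{n-1}+\mathcal L_1^{n-1})+O(\tau^2)$ since the sum over $j<1$ in $\mathcal L_1^{n-1}$ is empty.

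For the inductive step, assume $v_{j,n}-v_{j,n-1}=\tau(\mathcal R_j^{n-1}+\mathcal L_j^{n-1})+O(\tau^2)$ for all $j<i$, uniformly in $n$, and substitute \eqref{FDsadk} into \eqref{vgs} to reach \eqref{vexp}. Two expansions remain. \emph{(a) The normalization factor.} In the last line of \eqref{Y}, each summand $v_{i,n-1}^\top(v_{j,n}-v_{j,n-1})+\tau\gamma v_{i,n-1}^\top J(x_{n-1})^\top v_{j,n}$ is $O(\tau)$ by Lemma~\ref{lemvv}, so its square is $O(\tau^2)$ and $Y_{i,n}^2=1+2\tau\gamma v_{i,n-1}^\top J(x_{n-1})v_{i,n-1}+O(\tau^2)$, whence $1/Y_{i,n}=1-\tau\gamma v_{i,n-1}^\top J(x_{n-1})v_{i,n-1}+O(\tau^2)$; applying this factor to the leading vector $v_{i,n-1}$ produces precisely the $-\gamma v_{i,n-1}v_{i,n-1}^\top J(x_{n-1})v_{i,n-1}$ piece of $\mathcal L_i^{n-1}$. \emph{(b) The Gram--Schmidt subtractions.} Since $v_{i,n-1}^\top v_{j,n-1}=0$ one may write $v_{i,n-1}^\top v_{j,n}=v_{i,n-1}^\top(v_{j,n}-v_{j,n-1})$ and insert the inductive hypothesis; the orthogonality relations $v_{i,n-1}^\top v_{l,n-1}=0$ for all $l\le i-1$ kill every term of $v_{i,n-1}^\top\mathcal L_j^{n-1}$ and reduce $v_{i,n-1}^\top(\mathcal R_j^{n-1}+\mathcal L_j^{n-1})$ to $\gamma v_{i,n-1}^\top J(x_{n-1})v_{j,n-1}$, so that $\sum_{j<i}(v_{i,n-1}^\top v_{j,n})v_{j,n}=\tau\gamma\sum_{j<i}(v_{i,n-1}^\top J(x_{n-1})v_{j,n-1})v_{j,n-1}+O(\tau^2)$; the remaining subtraction $\tau\gamma\sum_{j<i}(v_{i,n-1}^\top J(x_{n-1})^\top v_{j,n})v_{j,n}$ is already $O(\tau)$, so replacing $v_{j,n}$ by $v_{j,n-1}$ costs only $O(\tau^2)$ and, using $v_{i,n-1}^\top J(x_{n-1})^\top v_{j,n-1}=v_{j,n-1}^\top J(x_{n-1})v_{i,n-1}$, it equals $\tau\gamma\sum_{j<i}(v_{j,n-1}^\top J(x_{n-1})v_{i,n-1})v_{j,n-1}+O(\tau^2)$. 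Adding the two subtractions and invoking the symmetry $J=J^\top$ of the Hessian $J=-\nabla^2 E$ gives exactly $-2\tau\gamma\sum_{j<i}v_{j,n-1}v_{j,n-1}^\top J(x_{n-1})v_{i,n-1}$, the remaining piece of $\tau\mathcal L_i^{n-1}$. Collecting the leading term $v_{i,n-1}$, the $\tau$-order term $\tau\gamma J(x_{n-1})v_{i,n-1}$ from $\tilde v_{i,n}$, and the two contributions just computed, while absorbing all cross products into $O(\tau^2)$, produces $v_{i,n}-v_{i,n-1}=\tau(\mathcal R_i^{n-1}+\mathcal L_i^{n-1})+O(\tau^2)$; dividing by $\tau$ closes the induction, and since $k$ is fixed all hidden constants stay uniform in $i$, $n$, $\tau$, $N$.

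I expect the main obstacle to be precisely the Gram--Schmidt subtraction $\sum_{j<i}(\tilde v_{i,n}^\top v_{j,n})v_{j,n}$ in step (b): it carries no visible factor of $\tau$ and looks $O(1)$ at first glance, so establishing that it is in fact $O(\tau)$ with exactly the first-order coefficient needed to regenerate the $-2\sum_{j<i}v_jv_j^\top$ part of $\mathcal L_i^{n-1}$ forces one to exploit the step-$(n-1)$ orthonormality to rewrite $v_{i,n-1}^\top v_{j,n}$ as $v_{i,n-1}^\top(v_{j,n}-v_{j,n-1})$ and then feed in the already-proven first-order expansion of $v_{j,n}-v_{j,n-1}$. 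This coupling to the lower-index vectors is exactly what makes the argument inductive in $i$ and what the auxiliary splittings are designed to disentangle; the rest is bookkeeping of $O(\tau^2)$ remainders under Assumption~A and the boundedness estimates from \cite{Z3}.
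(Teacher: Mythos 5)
Your argument is correct and shares the paper's overall skeleton: you expand the normalization factor $1/Y_{i,n}$ to first order exactly as in (\ref{split})--(\ref{split2}), you extract the hidden factor of $\tau$ from the projection coefficients $v_{i,n-1}^\top v_{j,n}$, and you combine the two sums over $j<i$ via the symmetry of $J$ to reassemble $\mathcal L_i^{n-1}$; the $O(\tau^2)$ cross terms (the paper's $A_6$) and the replacement of $v_{j,n}$ by $v_{j,n-1}$ inside coefficients already carrying a factor $\tau$ are handled identically. The one genuine difference is the mechanism for the key step you correctly single out as the main obstacle. The paper proceeds non-inductively: it splits $v_{i,n-1}^\top v_{j,n}=v_{i,n-1}^\top(v_{j,n}-\tilde v_{j,n})+v_{i,n-1}^\top\tilde v_{j,n}$, evaluates the second piece exactly via (\ref{hui}), and shows $v_{i,n-1}^\top(v_{j,n}-\tilde v_{j,n})=O(\tau^2)$ by a separate computation (\ref{hui2})--(\ref{hui3}) that unwinds the Gram--Schmidt formula for $v_{j,n}$ once more. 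You instead strengthen the claim to $v_{j,n}-v_{j,n-1}=\tau(\mathcal R_j^{n-1}+\mathcal L_j^{n-1})+O(\tau^2)$ and induct on $i$, writing $v_{i,n-1}^\top v_{j,n}=v_{i,n-1}^\top(v_{j,n}-v_{j,n-1})$ and noting that $v_{i,n-1}^\top\mathcal L_j^{n-1}=0$ by step-$(n-1)$ orthonormality, which leaves exactly $\tau\gamma\, v_{i,n-1}^\top J(x_{n-1})v_{j,n-1}+O(\tau^2)$. This is legitimate and arguably cleaner: the orthogonality observation replaces the paper's auxiliary estimate (\ref{hui2})--(\ref{hui3}) in one line, at the price of organizing the whole theorem as an induction in $i$ (the paper confines the induction to Lemma \ref{lemvv}, whose conclusion your strengthened hypothesis subsumes for the lower indices). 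Your version also makes explicit why the lower-index $\mathcal L_j$ contributions are invisible to $v_{i,n-1}$, which is a nice structural remark the paper leaves implicit.
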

\begin{remark}\label{rem31}
From this theorem we observe that the Lagrangian multiplier terms are recovered in the second equation of (\ref{FDsadk}) by invoking the third equation of (\ref{FDsadk}) such that, expect for the error $O(\tau)$,  (\ref{ee1}) is exactly the explicit numerical scheme of the equation of $v_i$ in (\ref{sadk}). As $\tau$ tends to 0, (\ref{ee1}) and thus the superposition of  the second and the third equations in (\ref{FDsadk}) converges to the dynamics of directional vectors in HiSD, which may indicate that the Gram-Schmidt process  has the same effects as the Lagrangian multiplier terms that justifies the claims in \textbf{(i)}. 

Furthermore, in error estimates we could easily generate the error equations by subtracting the reference equation of $v_i$ from (\ref{ee1}). In other words, (\ref{ee1}) provides a much more feasible form to generate the error equations than the original scheme (i.e. the second and the third equations in (\ref{FDsadk})).
\end{remark}
\begin{proof}
From the last-but-one equality of (\ref{Y}) and Lemma \ref{lemvv}, we have
$$Y_{i,n}=\big(1+2\tau\gamma v_{i,n-1}^\top J(x_{n-1})v_{i,n-1}+O(\tau^2)\big)^{1/2}. $$
Then we introduce a novel splitting
\begin{equation}\label{split}
\begin{array}{l}
\ds \frac{1}{Y_{i,n}}=1+\frac{1-Y_{i,n}^2}{Y_{i,n}(1+Y_{i,n})}\\[0.15in]
\qquad\,\ds=1+\frac{-2\tau\gamma v_{i,n-1}^\top J(x_{n-1})v_{i,n-1}+O(\tau^2)}{Y_{i,n}(1+Y_{i,n})}\\[0.15in]
\ds\qquad\,=1-\tau\gamma v_{i,n-1}^\top J(x_{n-1})v_{i,n-1}\\[0.05in]
\ds\qquad\qquad-2\tau\gamma v_{i,n-1}^\top J(x_{n-1})v_{i,n-1}\bigg(\frac{1}{Y_{i,n}(1+Y_{i,n})}-\frac{1}{2}\bigg)\\
\ds\qquad\qquad+\frac{O(\tau^2)}{Y_{i,n}(1+Y_{i,n})}.
\end{array}
\end{equation}
By (\ref{1-Y}) and Lemma \ref{lemvv}, the $(\cdots)$ term in the last-but-one right-hand side term of (\ref{split}) could be estimated as
\begin{equation}\label{YY} 
 \begin{array}{l}
\ds \bigg|\frac{1}{Y_{i,n}(1+Y_{i,n})}-\frac{1}{2}\bigg|=\frac{|1-Y_{i,n}|}{1+Y_{i,n}}\bigg(\frac{1}{Y_{i,n}}+\frac{1}{2}\bigg)\leq Q|1-Y_{i,n}|\leq Q\tau.
\end{array} 
 \end{equation}
Thus the last-but-one right-hand side term of (\ref{split}) is indeed an $O(\tau^2)$ term, and we invoke this in (\ref{split}) to obtain
\begin{equation}\label{split2}
\begin{array}{l}
\ds \frac{1}{Y_{i,n}}=1-\tau\gamma v_{i,n-1}^\top J(x_{n-1})v_{i,n-1}+O(\tau^2).
\end{array}
\end{equation}
We substitute $1/Y_{i,n}$ in (\ref{vexp}) by this equation to obtain
\begin{equation}\label{mhbest}
\begin{array}{l}
\ds v_{i,n}=\big(1-\tau\gamma v_{i,n-1}^\top J(x_{n-1})v_{i,n-1}+O(\tau^2)\big)\\[0.05in]
\ds \qquad\qquad\times\bigg(v_{i,n-1}+\tau\gamma J(x_{n-1})v_{i,n-1}-\sum_{j=1}^{i-1}(v_{i,n-1}^\top v_{j,n})v_{j,n}\\
\ds\qquad\qquad\qquad\qquad-\tau\gamma\sum_{j=1}^{i-1}(v_{i,n-1}^\top J(x_{n-1})^\top v_{j,n})v_{j,n}\bigg)\\
\ds\qquad=v_{i,n-1}+\tau\gamma J(x_{n-1})v_{i,n-1}-\sum_{j=1}^{i-1}(v_{i,n-1}^\top v_{j,n})v_{j,n}\\
\ds\qquad\qquad-\tau\gamma\sum_{j=1}^{i-1}(v_{i,n-1}^\top J(x_{n-1})^\top v_{j,n})v_{j,n}\\[0.2in]
\ds\qquad\qquad-\tau\gamma v_{i,n-1}^\top J(x_{n-1})v_{i,n-1}v_{i,n-1}\\[0.05in]
\ds\qquad\qquad+\tau\gamma\sum_{j=1}^{i-1}(v_{i,n-1}^\top v_{j,n})v_{i,n-1}^\top J(x_{n-1})v_{i,n-1}v_{j,n}+O(\tau^2)\\[0.1in]
\ds\qquad=:\sum_{m=1}^6 A_m+O(\tau^2).
\end{array}
\end{equation}
From the definition of $\tilde v_{j,n}$ for $1\leq j\leq i-1$, we have
\begin{equation}\label{hui}
 v_{i,n-1}^\top \tilde v_{j,n}=v_{i,n-1}^\top(v_{j,n-1}+\tau\gamma J(x_{n-1})v_{j,n-1})=\tau\gamma v_{i,n-1}^\top J(x_{n-1})v_{j,n-1}.
 \end{equation}
We apply this to rewrite $A_3$ as
$$\begin{array}{l}
\ds A_3=-\sum_{j=1}^{i-1}(v_{i,n-1}^\top v_{j,n})v_{j,n}\\[0.175in]
\ds\quad~=-\sum_{j=1}^{i-1}v_{i,n-1}^\top (v_{j,n}-\tilde v_{j,n})v_{j,n}-\sum_{j=1}^{i-1}v_{i,n-1}^\top \tilde v_{j,n}v_{j,n}\\[0.175in]
\ds\quad~=-\sum_{j=1}^{i-1}v_{i,n-1}^\top (v_{j,n}-\tilde v_{j,n})v_{j,n}-\tau\gamma\sum_{j=1}^{i-1}v_{i,n-1}^\top J(x_{n-1})v_{j,n-1} v_{j,n}\\[0.175in]
\ds\quad~=:A_{3,1}+A_{3,2}.
\end{array} $$
By Lemma \ref{lemvv}, $A_{3,2}$ could be reformulated as
\begin{equation}\label{A32} \begin{array}{l}
\ds A_{3,2}=-\tau\gamma\sum_{j=1}^{i-1}v_{i,n-1}^\top J(x_{n-1})v_{j,n-1} v_{j,n-1}\\[0.15in]
\ds\qquad\qquad+\tau\gamma\sum_{j=1}^{i-1}v_{i,n-1}^\top J(x_{n-1})v_{j,n-1} (v_{j,n-1}-v_{j,n})\\[0.15in]
\ds\qquad\,=-\tau\gamma\sum_{j=1}^{i-1}v_{i,n-1}^\top J(x_{n-1})v_{j,n-1} v_{j,n-1}+O(\tau^2).
\end{array} \end{equation}
To estimate $A_{3,1}$, from the third equation of (\ref{FDsadk}), we have
\begin{equation}
\ds v_{j,n}-\tilde v_{j,n}=\bigg(\frac{1}{Y_{j,n}}-1\bigg)\tilde v_{j,n}-\frac{1}{Y_{j,n}}\sum_{l=1}^{j-1}(\tilde v_{j,n}^\top v_{l,n})v_{l,n},
\end{equation}
which implies
\begin{equation}\label{hui2}
\ds v_{i,n-1}^\top (v_{j,n}-\tilde v_{j,n})=\bigg(\frac{1}{Y_{j,n}}-1\bigg)v_{i,n-1}^\top\tilde v_{j,n}-\frac{1}{Y_{j,n}}\sum_{l=1}^{j-1}\tilde v_{j,n}^\top v_{l,n} v_{i,n-1}^\top v_{l,n}.
\end{equation}
From (\ref{split2}) and (\ref{hui}), the first right-hand side term of (\ref{hui2}) is an $O(\tau^2)$ term, while the second right-hand side term could be reformulated as
\begin{equation}\label{hui3} \begin{array}{l}
\ds  -\frac{1}{Y_{j,n}}\sum_{l=1}^{j-1}\tilde v_{j,n}^\top v_{l,n} v_{i,n-1}^\top v_{l,n}\\[0.15in]
\ds\quad=-\frac{1}{Y_{j,n}}\sum_{l=1}^{j-1}\tilde v_{j,n}^\top (v_{l,n}-v_{l,n-1}) v_{i,n-1}^\top v_{l,n}\\[0.2in]
\ds\qquad-\frac{1}{Y_{j,n}}\sum_{l=1}^{j-1}\tilde v_{j,n}^\top v_{l,n-1} v_{i,n-1}^\top v_{l,n}.
\end{array} \end{equation}
By Lemma \ref{lemvv}, $v_{i,n-1}^\top v_{l,n}=v_{i,n-1}^\top (v_{l,n}-v_{l,n-1})$ is an $O(\tau)$ term, and 
$$ \tilde v_{j,n}^\top v_{l,n-1}=v_{l,n-1}^\top (v_{j,n-1}+\tau\gamma J(x_{n-1})v_{j,n-1})=\tau\gamma v_{l,n-1}^\top J(x_{n-1})v_{j,n-1}$$
is also an $O(\tau)$ term. Thus, (\ref{hui3}) is an $O(\tau^2)$ term, which implies (\ref{hui2}) is also an $O(\tau^2)$ term. Consequently, $A_{3,1}$ is an $O(\tau^2)$ term, which, together with (\ref{A32}), leads to
\begin{equation}\label{a3}
 A_3=-\tau\gamma\sum_{j=1}^{i-1}v_{i,n-1}^\top J(x_{n-1})v_{j,n-1} v_{j,n-1}+O(\tau^2). 
 \end{equation}
 We then split $A_4$ as
 \begin{equation*}
 \begin{array}{l}
 \ds A_4=-\tau\gamma\sum_{j=1}^{i-1}(v_{i,n-1}^\top J(x_{n-1})^\top v_{j,n-1})v_{j,n-1}\\[0.15in]
 \ds\qquad-\tau\gamma\sum_{j=1}^{i-1}(v_{i,n-1}^\top J(x_{n-1})^\top (v_{j,n}-v_{j,n-1}))v_{j,n-1}\\[0.15in]
 \ds\qquad-\tau\gamma\sum_{j=1}^{i-1}(v_{i,n-1}^\top J(x_{n-1})^\top v_{j,n})(v_{j,n}-v_{j,n-1}) .
 \end{array}
 \end{equation*}
 By Lemma \ref{lemvv} we obtain
\begin{equation}\label{a4}
A_4=-\tau\gamma\sum_{j=1}^{i-1}(v_{i,n-1}^\top J(x_{n-1})^\top v_{j,n-1})v_{j,n-1}+O(\tau^2).
\end{equation} 
By the symmetry of $J$, we incorporate (\ref{a3}) and (\ref{a4}) to find that
 $$\begin{array}{l}
 \ds A_1+\cdots+ A_5=v_{i,n-1}+\tau\gamma J(x_{n-1})v_{i,n-1}\\[0.05in]
\ds\qquad\qquad\qquad\qquad-\tau\gamma v_{i,n-1}^\top J(x_{n-1})v_{i,n-1}v_{i,n-1}\\[0.05in]
\ds\qquad\qquad\qquad\qquad-2\tau\gamma\sum_{j=1}^{i-1}(v_{i,n-1}^\top J(x_{n-1}) v_{j,n-1})v_{j,n-1}+O(\tau^2).
\end{array}  $$
 Therefore, in order to get (\ref{ee1}), we need to show that $A_6=O(\tau^2)$. As
 $$\begin{array}{l}
 \ds A_6=\tau\gamma\sum_{j=1}^{i-1}(v_{i,n-1}^\top v_{j,n})v_{i,n-1}^\top J(x_{n-1})v_{i,n-1}v_{j,n}\\
 \ds\quad~=\tau\gamma\sum_{j=1}^{i-1}v_{i,n-1}^\top (v_{j,n}-v_{j,n-1})v_{i,n-1}^\top J(x_{n-1})v_{i,n-1}v_{j,n},
\end{array}   $$
 we apply Lemma \ref{lemvv} again to find that $A_6$ is an $O(\tau^2)$ term, which completes the proof. 
\end{proof}

\section{Error estimates and numerical experiments}\label{sec4}
Based on Theorem \ref{thmvv}, we prove error estimates for the numerical scheme (\ref{FDsadk}) and perform numerical experiments to substantiate the theoretical findings.
\subsection{Error estimates}\label{sec41}
The error equation of $e^x_n$ could be generated by subtracting the first equation of (\ref{FDsadk}) from the reference equation of $x(t)$, which is obtained by discretizing the first equation of (\ref{sadk}) via the Euler discretization 
$$x(t_{n}) =x(t_{n-1})+\tau\mathcal S(t_{n-1})+O(\tau^2). $$
The error equation of $e^{v_i}_n$ could be derived by subtracting (\ref{ee1}) from the reference equation of $v_i(t)$
$$\begin{array}{l}
\ds v_{i}(t_n)=v_{i}(t_{n-1})+\tau\mathcal R_i(t_{n-1})+\tau\mathcal L_i(t_{n-1})+O(\tau^2).
\end{array}  $$
Based on these error equations, the error estimates could be performed following those for standard system of differential equations \cite{BaoCao}, and we thus directly state the result in the following theorem.
\begin{theorem}\label{thmevk}
Under the Assumption A, the following estimate holds for the scheme (\ref{FDsadk}) for $\tau$ sufficiently small 
\begin{equation}\label{z3est}
\|x(t_n)-x_{n}\|+\sum_{i=1}^k \|v_i(t_n)-v_{i,n}\|\leq Q\tau,~~1\leq n\leq N. 
\end{equation}
Here $Q$ is independent from $\tau$, $n$ and $N$.
\end{theorem}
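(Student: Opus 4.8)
The plan is to reduce the convergence proof to the classical analysis of an explicit Euler scheme, with Theorem~\ref{thmvv} supplying the decisive link between the discrete update (\ref{FDsadk}) and the continuous flow (\ref{sadk}). I would first introduce the error quantities $e^x_n := x(t_n)-x_n$ and $e^{v_i}_n := v_i(t_n)-v_{i,n}$, which vanish at $n=0$ since the scheme is initialized with the exact data $x_0$ and orthonormal $\{v_{i,0}\}$. On the continuous side, Assumption~A makes the right-hand side of (\ref{sadk}) Lipschitz in $(x,\{v_i\})$; combined with $\|v_i(t)\|\equiv 1$ (the Stiefel constraint preserved by HiSD) this yields, via Gronwall on the $x$-equation, uniform boundedness of the trajectory on $[0,t_N]$, hence $C^1$-regularity with bounded derivatives, hence Lipschitz continuity in $t$ of $\mathcal S(t)$, $\mathcal R_i(t)$, $\mathcal L_i(t)$ along the solution. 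This legitimizes the reference expansions
\[
x(t_n) = x(t_{n-1}) + \tau\,\mathcal S(t_{n-1}) + O(\tau^2),\qquad
v_i(t_n) = v_i(t_{n-1}) + \tau\,\mathcal R_i(t_{n-1}) + \tau\,\mathcal L_i(t_{n-1}) + O(\tau^2),
\]
with the $O(\tau^2)$ terms uniform in $n$.

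Subtracting the first equation of (\ref{FDsadk}) and the recovered identity (\ref{ee1}) from these expansions gives the error recursions
\[
e^x_n = e^x_{n-1} + \tau\big(\mathcal S(t_{n-1}) - \mathcal S^{n-1}\big) + O(\tau^2),
\]
\[
e^{v_i}_n = e^{v_i}_{n-1} + \tau\big(\mathcal R_i(t_{n-1}) - \mathcal R_i^{n-1}\big) + \tau\big(\mathcal L_i(t_{n-1}) - \mathcal L_i^{n-1}\big) + O(\tau^2),
\]
where the crucial point is that (\ref{ee1}) has already absorbed the Gram--Schmidt step, so the quantities compared here are exactly the continuous drift terms evaluated at the continuous versus the discrete state.

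The core step is to bound the drift differences by the accumulated error $\mathcal E_{n-1} := \|e^x_{n-1}\| + \sum_{j=1}^k\|e^{v_j}_{n-1}\|$. I would expand $\mathcal S$, $\mathcal R_i$ and $\mathcal L_i$ term by term and use: the Lipschitz bounds on $F$ and $J$ from Assumption~A; the boundedness of $\|x_n\|$ established in \cite{Z3} and of $\|x(t)\|$; and the elementary fact that for unit vectors $\|v_j(t_{n-1})v_j(t_{n-1})^\top - v_{j,n-1}v_{j,n-1}^\top\| \le 2\|e^{v_j}_{n-1}\|$ (both the Gram--Schmidt outputs $\{v_{j,n-1}\}$ and the continuous $\{v_j(t_{n-1})\}$ have unit norm). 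Putting the pieces together yields
\[
\|\mathcal S(t_{n-1}) - \mathcal S^{n-1}\| + \sum_{i=1}^k\|\mathcal R_i(t_{n-1}) - \mathcal R_i^{n-1}\| + \sum_{i=1}^k\|\mathcal L_i(t_{n-1}) - \mathcal L_i^{n-1}\| \le Q\,\mathcal E_{n-1},
\]
with $Q$ depending only on $L$, $\beta$, $\gamma$, $k$ and the uniform bounds. Summing the two error recursions then gives $\mathcal E_n \le (1+Q\tau)\mathcal E_{n-1} + Q\tau^2$ with $\mathcal E_0 = 0$, and the discrete Gronwall inequality (as used for standard ODE systems in \cite{BaoCao}) produces $\mathcal E_n \le Q\tau^2\sum_{m=0}^{n-1}(1+Q\tau)^m \le Q\tau^2 N e^{QN\tau} \le Q\tau$ since $N\tau = t_N$ is fixed, which is precisely (\ref{z3est}).

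I expect the only delicate part to be the book-keeping in the core step — checking that every nonlinear coupling appearing in $\mathcal S$, $\mathcal R_i$, $\mathcal L_i$ is Lipschitz with a constant independent of $\tau$, $n$ and $N$ — together with the a priori boundedness and regularity of the continuous HiSD trajectory needed to justify the $O(\tau^2)$ truncation terms. Once Theorem~\ref{thmvv} is available, no structural obstacle remains and the argument is otherwise routine, which is why the theorem can be stated directly with only a pointer to the standard framework.
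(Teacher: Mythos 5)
Your proposal follows exactly the route the paper takes: use Theorem \ref{thmvv} to replace the Gram--Schmidt update by the recovered identity (\ref{ee1}), subtract it (and the first equation of (\ref{FDsadk})) from the Euler-expanded reference equations to obtain error recursions, bound the drift differences via Assumption A and the unit-norm/boundedness properties, and close with discrete Gronwall. The paper states this only in outline and defers to the standard ODE framework of \cite{BaoCao}; your write-up supplies the same argument with the routine details filled in, so it is correct and essentially identical in approach.
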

\begin{remark}\label{rem1}
Let $\{X_n\}_{n=1}^N$ and $\{V_{i,n}\}_{i=1,n=1}^{k,N}$ be numerical solutions of the scheme (\ref{FDsadk}) with the second equation replaced by (\ref{FDmh}), i.e. the numerical discretization scheme in \cite{Z3}. According to \cite{Z3} the following estimates hold
$$
\|x(t_n)-x_n\|+\sum_{i=1}^k \|v_i(t_n)-v_{i,n}\|\leq Q\tau,~~1\leq n\leq N,
$$
which, together with (\ref{z3est}), leads the following estimate between numerical solutions of different schemes 
$$\begin{array}{l}
\ds \|x_n-X_n\|+\sum_{i=1}^k\|v_{i,n}-V_{i,n}\|\\[0.175in]
\ds\quad\leq \|x(t_n)-x_n\|+\|x(t_n)-X_n\|\\[0.05in]
\ds\qquad+\sum_{i=1}^k\big(\|v_i(t_n)-v_{i,n}\|+\|v_i(t_n)-V_{i,n}\|\big)\leq Q\tau.
\end{array}  $$
This implies that the difference between the numerical solutions turns to zero as $\tau$ decreases such that both methods generate almost the same numerical solutions for $\tau$ small enough. Nevertheless, the dynamics of directional vectors in (\ref{FDsadk}) saves $O(d^2N k^2)$ (or $O(dN k^2)$ if the dimer method \cite{Dimer} could be used to approximate the product of the Hessian matrix and the vector) computational cost in comparison with the scheme (\ref{FDmh}) that significantly improves the computational efficiency for large $N$, $d$ or $k$.
\end{remark}
\subsection{Numerical experiments}
We carry out numerical experiments to test the convergence rate (denoted by ``CR'' in tables) of the numerical scheme (\ref{FDsadk}) and compare the behavior between (\ref{FDsadk}) and the scheme in \cite{Z3}. We consider the following two-dimensional system proposed in \cite{EZho}
\begin{equation}\label{ezho}
 E(x,y)=-\frac{1}{4}(x^2-1)^2-\frac{1}{2}y^2. 
 \end{equation}
For this system $(0,0)$ is an index-1 saddle point and $(1,0)$ is an index-2 saddle point.

\textbf{Example 1: Accuracy test}
We compute the index-1 saddle point of (\ref{ezho}) with the initial conditions
$$x_0=
\left[\!\!
\begin{array}{c}
\ds 1\\
\ds 0.5
\end{array}
\!\!\right],~~
v_{1,0}=\frac{1}{\sqrt{2}}
\left[\!\!
\begin{array}{c}
\ds -1\\
\ds -1
\end{array}
\!\!\right]
 $$
 and the index-2 saddle point with the initial conditions 
$$x_0=
\left[\!\!
\begin{array}{c}
\ds 1.3\\
\ds 0.5
\end{array}
\!\!\right],~~
v_{1,0}=\frac{1}{\sqrt{2}}
\left[\!\!
\begin{array}{c}
\ds -1\\
\ds -1
\end{array}
\!\!\right],~~
v_{2,0}=\frac{1}{\sqrt{5}}
\left[\!\!
\begin{array}{c}
\ds -2\\
\ds -1
\end{array}
\!\!\right].
 $$ 
As the exact solutions to the high-index saddle dynamics are not available, numerical solutions computed under $\tau=2^{-13}$ serve as the reference solutions. We set $\beta=\gamma=1$ and $T=7$ to ensure that the saddle dynamics reaches the target saddle point. Numerical results are presented in Tables \ref{table1:1}-\ref{table1:2}, which demonstrate the first-order accuracy of the numerical scheme (\ref{FDsadk}) as proved in Theorem \ref{thmevk}.
\begin{table}[h]
\setlength{\abovecaptionskip}{0pt}
\centering
\caption{Convergence rates of computing index-1 saddle point.}
{\small
\begin{tabular}{c|cc|cc} \cline{1-5}
$\tau$& $\max_n \|x(t_n)-x_n\|$ & CR &  $\max_n \|v_1(t_n)-v_{1,n}\|$ &CR\\ 
\cline{1-5}		
$2^{-6}$&	1.23E-01&		&9.83E-02	&\\
$2^{-7}$&	6.00E-02&	1.04 &	4.94E-02&	0.99\\ 
$2^{-8}$&	2.92E-02&	1.04 &	2.44E-02&	1.02 \\
$2^{-9}$&	1.40E-02&	1.06 &	1.18E-02&	1.05 \\
				\hline	
			\end{tabular}}
			\label{table1:1}
		\end{table}

\begin{table}[h]
\setlength{\abovecaptionskip}{0pt}
\centering
\caption{Convergence rates of computing index-2 saddle point.}
	\resizebox{\textwidth}{!}{
\begin{tabular}{c|cc|cc|cc} \cline{1-7}
$\tau$& $\max_n \|x(t_n)-x_n\|$ & CR &  $\max_n \|v_1(t_n)-v_{1,n}\|$ &CR& $\max_n \|v_2(t_n)-v_{2,n}\|$ &CR\\ \cline{1-7}		
$2^{-6}$&	2.27E-01&		&1.43E-01	&	    &    1.43E-01&\\	
$2^{-7}$&	1.09E-01&	1.06 &	7.08E-02&	1.01 &	7.08E-02&	1.01\\ 
$2^{-8}$&	5.28E-02&	1.05 &	3.53E-02&	1.00 &	3.53E-02&	1.00 \\
$2^{-9}$&	2.53E-02&	1.06 &	1.72E-02&	1.03 &	1.72E-02&	1.03 \\
				\hline
			\end{tabular}}
			\label{table1:2}
		\end{table}

\textbf{Example 2: Comparison between two schemes}
We compare the behavior between the scheme (\ref{FDsadk}) and the scheme in \cite{Z3} by selecting the same initial values and parameters as in the previous example and computing $x_n$ and $X_n$ in Figure \ref{fig1}, which shows that both methods converge to the target saddle points along the same trajectory. 

To compare the dynamical behavior of these two methods in a pointwise-in-time manner, we plot $\|x_n-X_n\|$ and $\|v_{1,n}-V_{1,n}\|$ in the computation of the index-1 saddle point under different time-step size $\tau$ in Figure \ref{fig2}, which shows that the differences between the numerical solutions of these two methods are quite small at each time step, and such differences shrink as $\tau$ decreases. In particular, it seems from Figure \ref{fig2} that if $\tau$ becomes $\tau/2$, then the magnitudes of $\|x_n-X_n\|$ and $\|v_{1,n}-V_{1,n}\|$ also reduce by a half, which is consistent with the discussions in Remark \ref{rem1}.

\begin{figure}[h!]
	\setlength{\abovecaptionskip}{0pt}
	\centering	\includegraphics[width=2in,height=1.5in]{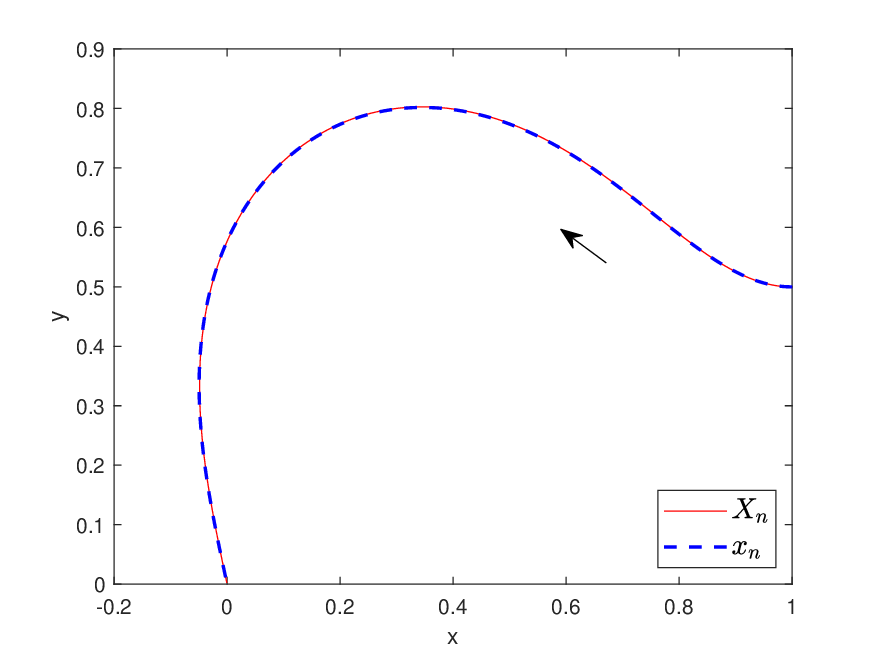}~~~~~~
	\includegraphics[width=2in,height=1.5in]{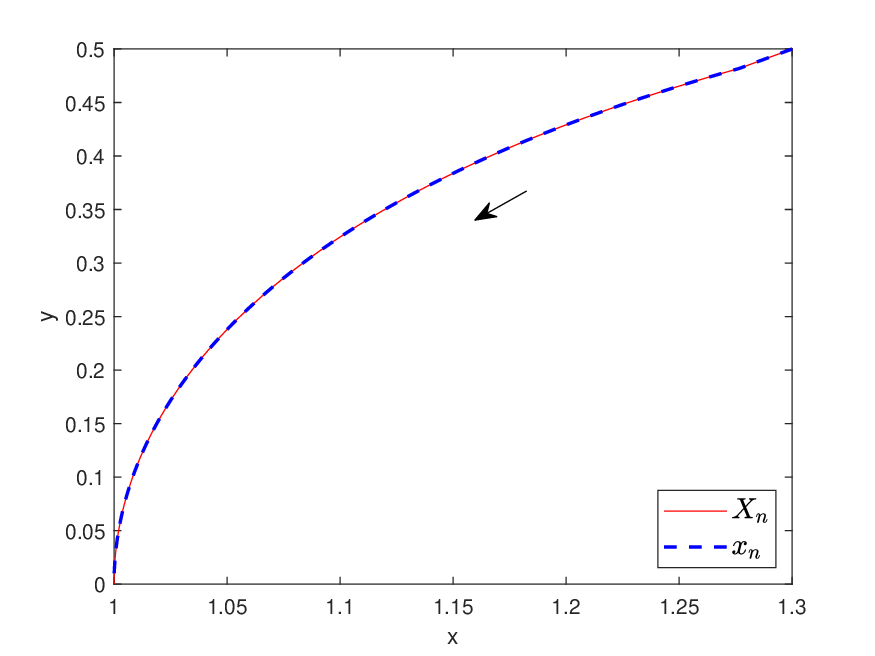}
	\caption{Convergence of numerical solutions $x_n$ and $X_n$ to (left) the index-1 saddle point and (right) the index-2 saddle point under $T=7$ and $\tau=1/100$.}
	\label{fig1}
\end{figure}

\begin{figure}[h!]
	\setlength{\abovecaptionskip}{0pt}
	\centering	\includegraphics[width=2in,height=1.5in]{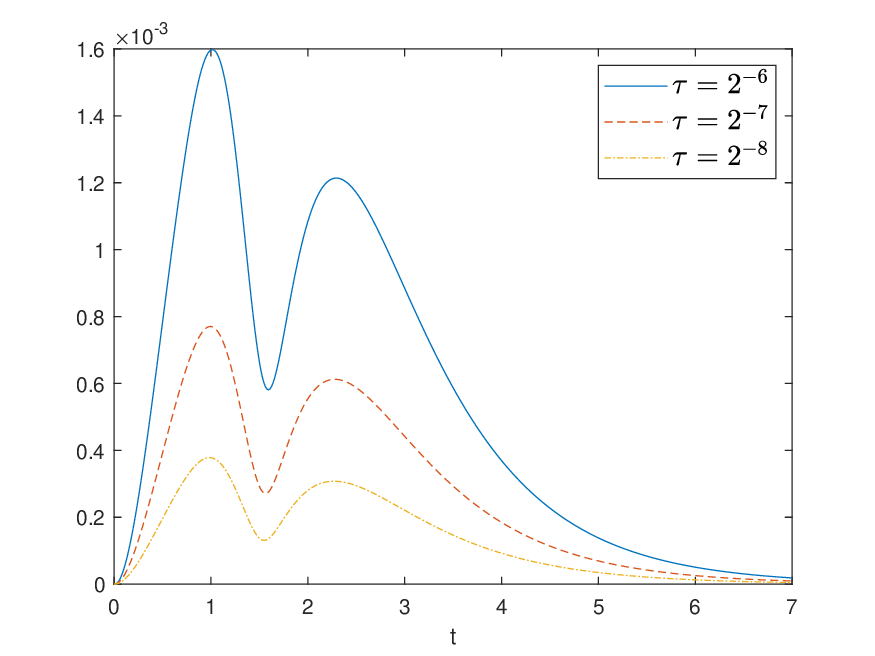}~~~~~~
	\includegraphics[width=2in,height=1.5in]{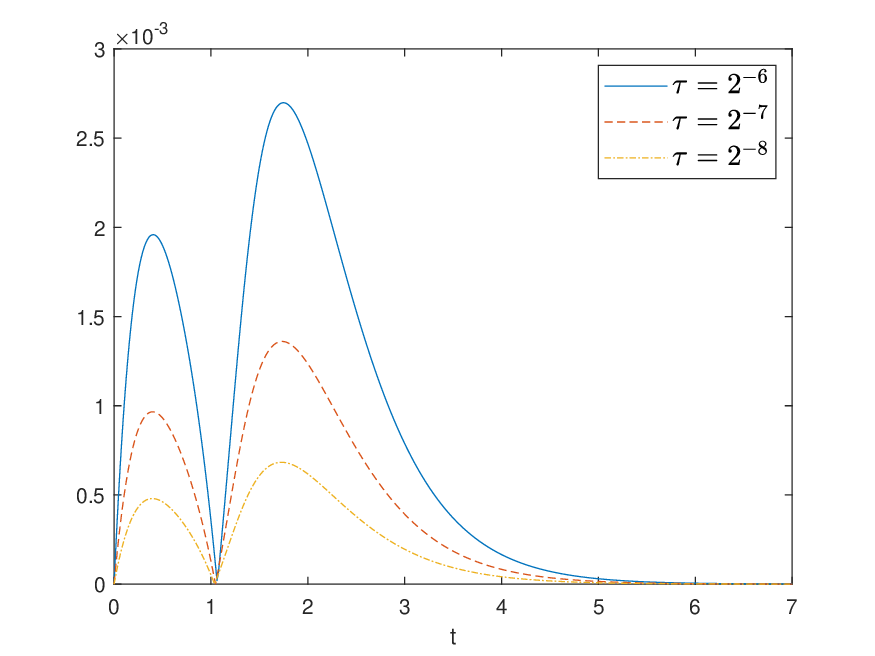}
	\caption{Plots of (left) $\|x_n-X_n\|$ and (right) $\|v_{1,n}-V_{1,n}\|$ under $T=7$ and different $\tau$ when computing the index-1 saddle point.}
	\label{fig2}
\end{figure}

\section{Extension to constrained HiSD}\label{sec5}
In this section we extend the developed methods and results for the constrained HiSD (\ref{csadk}) to substantiate the conclusions in \textbf{(ii)} in Section \ref{sec1.2}.

\subsection{Numerical discretization}
From the derivation of (\ref{csadk}) in \cite{CHiSD2021}, the nonlinear terms 
 \begin{equation}\label{omit1}
 -xx^\top F(x)\text{ and }  \mathcal L_i-xx^\top J(x)v_i+ xv_i^\top F(x) 
 \end{equation}
  in the equations of $x$ and $v_i$ are proposed to ensure the following manifold properties: if the following relations
\begin{equation}\label{prop}
 x\in S^{d-1},~~v_i^\top x=0,~~v_{i}^\top v_{j}=\delta_{ij},~~1\leq i,j\leq k
 \end{equation} 
hold at $t=0$, then they hold for any $t\geq 0$.
In practical computaitons, the following efficient numerical scheme of (\ref{csadk}) was proposed in \cite{CHiSD2021} for $1\leq n\leq N$
\begin{equation}\label{cFDsadk}
\left\{
\begin{array}{l}
\ds \tilde x_{n} =x_{n-1}+\tau\mathcal S^{n-1},\\[0.05in]
\ds x_n=\frac{\tilde x_n}{\|\tilde x_n\|},\\[0.15in]
\ds \tilde v_{i,n}=v_{i,n-1}+\tau \mathcal R_i^{n-1},\\[0.05in]
\hat v_{i,n}=\tilde v_{i,n}-\tilde v_{i,n}^\top x_n x_n,\\[0.05in]
\ds  v_{i,n}=\text{GramSchmidt}(v_{1,n},\cdots,v_{i-1,n};\hat v_{i,n}),~~1\leq i\leq k,
\end{array}
\right.
\end{equation}
equipped with the initial state $x_0\in S^{d-1}$ and orthonormal initial directional vectors $\{v_{i,0}\}_{i=1}^k$  such that $v_{i,0}^\top x_0=0$ for $1\leq i\leq k$.
Here the second equation of (\ref{cFDsadk}) represents the retraction in order to ensure that $x_n\in S^{d-1}$. The last two equations, which stand for the vector transport and the Gram-Schmidt orthonormalization procedure, respectively, aim to ensure the discrete analogue of (\ref{prop}), that is,
\begin{equation}\label{propn}
v_{i,n}^\top x_n=0,~~v_{i,n}^\top v_{j,n}=\delta_{ij},~~1\leq i,j\leq k,~~0\leq n\leq N.
\end{equation}
Furthermore, we use the explicit expression of the Gram-Schmidt orthonormalization as (\ref{vgs}) with $\tilde v_{i,n}$ and $Y_{i,n}$ in (\ref{vgs}) replaced by $\hat v_{i,n}$ and $Z_{i,n}$, respectively, for distinguishment.

\subsection{Recovery of constrained HiSD}
The main result of this section is to recover the schemes of $x$ and $\{v_i\}_{i=1}^k$ in the following theorem.
\begin{theorem}\label{thmc}
For $\tau$ small enough, the following relations could be derived from the scheme (\ref{cFDsadk})
\begin{equation}\label{thmceq}
\left\{
\begin{array}{l}
\ds  \frac{x_{n} -x_{n-1}}{\tau}=\mathcal S^{n-1}- x_{n-1}x_{n-1}^\top F(x_{n-1})+O(\tau),\\[0.125in]
\ds  \frac{v_{i,n}-v_{i,n-1}}{\tau}=\mathcal R^{n-1}_i+\mathcal L_i^{n-1}- x_{n-1}x_{n-1}^\top J(x_{n-1}) v_{i,n-1}\\[0.125in]
\ds\qquad\qquad+ x_{n-1} v_{i,n-1}^\top F(x_{n-1})+O(\tau),~~1\leq i\leq k.
\end{array}
\right.
\end{equation}
\end{theorem}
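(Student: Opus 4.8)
The plan is to treat the two lines of (\ref{thmceq}) separately, handling the state variable by a direct expansion of the retraction and then reducing the directional-vector identity to Theorem~\ref{thmvv}. Throughout I use the discrete manifold relations (\ref{propn}) --- which hold by construction, since the vector transport forces $\hat v_{i,n}^\top x_n=0$ and the Gram--Schmidt step both preserves orthogonality to $x_n$ and enforces orthonormality --- together with the boundedness of $\|x_n\|$ and $\|\tilde v_{i,n}\|$ coming from Assumption~A as in the unconstrained case, and I set $\beta=\gamma=1$ as in (\ref{csadk}).

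For the state variable I start from $x_n=\tilde x_n/\|\tilde x_n\|$. Since $x_{n-1}\in S^{d-1}$ and $v_{j,n-1}^\top x_{n-1}=0$, one has $x_{n-1}^\top\mathcal S^{n-1}=\beta\,x_{n-1}^\top F(x_{n-1})$, so
\[
\|\tilde x_n\|^2=1+2\tau\beta\,x_{n-1}^\top F(x_{n-1})+O(\tau^2),\qquad \frac{1}{\|\tilde x_n\|}=1-\tau\beta\,x_{n-1}^\top F(x_{n-1})+O(\tau^2).
\]
Multiplying $x_n=\|\tilde x_n\|^{-1}(x_{n-1}+\tau\mathcal S^{n-1})$ out gives the first line of (\ref{thmceq}); in particular $\|x_n-x_{n-1}\|\le Q\tau$ and, more precisely, $x_n-x_{n-1}=\tau\big(\mathcal S^{n-1}-x_{n-1}x_{n-1}^\top F(x_{n-1})\big)+O(\tau^2)$, which I reuse below.

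For the directional vectors I first establish the analogue of Lemma~\ref{lemvv}, $\|v_{i,n}-v_{i,n-1}\|\le Q\tau$, by the same induction on $i$; the only new feature is the transport step, but since $x_n^\top v_{j,n}=0$ the coefficients $\hat v_{i,n}^\top v_{j,n}$ coincide with $\tilde v_{i,n}^\top v_{j,n}$ and $Z_{i,n}^2=\|\tilde v_{i,n}\|^2-(\tilde v_{i,n}^\top x_n)^2-\sum_{j<i}(\tilde v_{i,n}^\top v_{j,n})^2$ differs from $Y_{i,n}^2$ only by $O(\tau^2)$, so the estimates (\ref{Y})--(\ref{v-v2}) transfer verbatim. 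The key computation is then the expansion of $\hat v_{i,n}$: using $\tilde v_{i,n}=v_{i,n-1}+\tau\gamma J(x_{n-1})v_{i,n-1}$, the symmetry of $J$, $x_n=x_{n-1}+O(\tau)$, the above expansion of $x_n-x_{n-1}$ together with $v_{i,n-1}^\top x_{n-1}=0$ and $v_{i,n-1}^\top\mathcal S^{n-1}=-\beta\,v_{i,n-1}^\top F(x_{n-1})$, one gets $\tilde v_{i,n}^\top x_n=\tau\big(\gamma\,v_{i,n-1}^\top J(x_{n-1})x_{n-1}-\beta\,v_{i,n-1}^\top F(x_{n-1})\big)+O(\tau^2)$, hence
\[
\hat v_{i,n}=v_{i,n-1}+\tau\mathcal R_i^{n-1}-\tau\,x_{n-1}x_{n-1}^\top J(x_{n-1})v_{i,n-1}+\tau\,x_{n-1}v_{i,n-1}^\top F(x_{n-1})+O(\tau^2).
\]
The decisive observation is that the two new $O(\tau)$ corrections are scalar multiples of $x_{n-1}$, hence orthogonal to every $v_{j,n-1}$, including $j=i$.

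It then remains to feed $\hat v_{i,n}$ into the Gram--Schmidt formula (\ref{vgs}) with $Y_{i,n}$ replaced by $Z_{i,n}$. Because $x_n^\top v_{j,n}=0$, the Gram--Schmidt numerator is precisely the numerator of Theorem~\ref{thmvv} minus the transport term $(\tilde v_{i,n}^\top x_n)x_n$, which is a $\tau$-multiple of $x_{n-1}$ up to $O(\tau^2)$. Since the normalizer always enters as $1+O(\tau)$ and $1/Z_{i,n}=1/Y_{i,n}+O(\tau^2)$, the splitting (\ref{split})--(\ref{mhbest}) and the analysis of $A_1,\dots,A_6$ apply verbatim to the first piece, regenerating $\mathcal R_i^{n-1}+\mathcal L_i^{n-1}$ as in (\ref{ee1}), while the transport term simply passes through, contributing $x_{n-1}v_{i,n-1}^\top F(x_{n-1})-x_{n-1}x_{n-1}^\top J(x_{n-1})v_{i,n-1}$. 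Collecting terms yields the second line of (\ref{thmceq}). The step I expect to be the main obstacle is exactly this bookkeeping: one must check that the $x_n$-component of $\hat v_{i,n}$, the discrepancies $x_n-x_{n-1}$ and $v_{j,n}-v_{j,n-1}$, and the normalizer $Z_{i,n}$ all interact only at order $\tau^2$ --- which is where (\ref{propn}) and the Lemma~\ref{lemvv} analogue are invoked repeatedly --- so that the delicate cancellations of Theorem~\ref{thmvv} are genuinely inherited rather than disturbed by the sphere constraint.
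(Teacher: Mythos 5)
Your proposal is correct and follows essentially the same route as the paper: the same expansion of $\|\tilde x_n\|$ for the state variable, the same preliminary bound $\|v_{i,n}-v_{i,n-1}\|\leq Q\tau$, and the same expansion $\hat v_{i,n}=v_{i,n-1}+\tau\mathcal R_i^{n-1}-\tau x_{n-1}x_{n-1}^\top J(x_{n-1})v_{i,n-1}+\tau x_{n-1}v_{i,n-1}^\top F(x_{n-1})+O(\tau^2)$, which is exactly the paper's $\mathcal L_{i,n-1}$. Your observation that the transport correction is an $O(\tau)$ multiple of $x_{n-1}$ and hence passes through the Gram--Schmidt step (so the $A_1,\dots,A_6$ analysis of Theorem \ref{thmvv} is inherited) is just a more compact packaging of the paper's $B_1$--$B_4$ bookkeeping in (\ref{LL})--(\ref{cs1}).
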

\begin{remark}
Similar to Remark \ref{rem31}, the recovered schemes in (\ref{thmceq}) have exactly the same forms as the continuous problem (\ref{csadk}) such that the statements in \textbf{(ii)} could be justified.
\end{remark}
\begin{proof} We prove this theorem in the following three steps.
\paragraph{Step 1: Derivation of the first equation in (\ref{thmceq})}

From the first equation of (\ref{cFDsadk}) we apply $x_{n-1}^\top v_{j,n-1}=0$ to obtain 
\begin{equation}\label{cxnorm}
\begin{array}{l}
\ds \|\tilde x_{n}\|^2=1+2\tau x_{n-1}^\top F(x_{n-1})+O(\tau^2).
\end{array}
\end{equation}
Then we use this and the second equation of (\ref{cFDsadk}) to obtain 
$$\begin{array}{l}
\ds x_n=\tilde x_n+\frac{1-\|\tilde x_n\|^2}{\|\tilde x_n\|(1+\|\tilde x_n\|)}\tilde x_n\\[0.15in]
\ds\quad\,\,=\tilde x_n+\frac{-2\tau x_{n-1}^\top F(x_{n-1})+O(\tau^2)}{\|\tilde x_n\|(1+\|\tilde x_n\|)}\tilde x_n\\[0.15in]
\ds\quad\,\,=\tilde x_n-\tau x_{n-1}^\top F(x_{n-1})\tilde x_n\\[0.05in]
\ds\qquad-2\tau x_{n-1}^\top F(x_{n-1})\bigg(\frac{1}{\|\tilde x_n\|(1+\|\tilde x_n\|)}-\frac{1}{2}\bigg)\tilde x_n+O(\tau^2).
\end{array} $$
Similar to the estimate (\ref{YY}), the third right-hand side term is an $O(\tau^2)$ term, while, based on the first equation of (\ref{cFDsadk}), the second right-hand side term could be rewritten as
$$-\tau x_{n-1}^\top F(x_{n-1})\tilde x_n= -\tau x_{n-1}^\top F(x_{n-1})x_{n-1}+O(\tau^2).$$
We incorporate the above equations to get
$$x_n=\tilde x_n-\tau x_{n-1}^\top F(x_{n-1})x_{n-1}+O(\tau^2), $$
which proves the first equation of (\ref{thmceq}). 

\paragraph{Step 2: A preliminary estimate of $v_{i,n}-v_{i,n-1}$}

The derivation of the second equation of (\ref{thmceq}) is much more complicated as we need to combine the last three equations in (\ref{cFDsadk}) by an appropriate manner. We first invoke the third equation of (\ref{cFDsadk}) in the forth equation to obtain
$$\begin{array}{l}
\ds \hat v_{i,n}=v_{i,n-1}+\tau J(x_{n-1}) v_{i,n-1}-(v_{i,n-1}+\tau J(x_{n-1}) v_{i,n-1})^\top x_n x_n.
\end{array} $$
From the first equation of (\ref{thmceq}) we have
$$v_{i,n-1}^\top x_n=-\tau v_{i,n-1}^\top F(x_{n-1})+O(\tau^2). $$
Combining the above two equations and applying  the substitution $ x_n=x_{n-1}+O(\tau)$ (cf. the first equation of (\ref{thmceq})) lead to
\begin{equation}\label{cvhv} \begin{array}{l}
\ds \hat v_{i,n}=v_{i,n-1}+\tau J(x_{n-1}) v_{i,n-1}+\tau v_{i,n-1}^\top F(x_{n-1})x_n      \\[0.05in]
\ds\qquad\qquad-\tau v_{i,n-1}^\top J(x_{n-1})^\top  x_n x_n+O(\tau^2)\\[0.05in]
\ds\quad~\,=v_{i,n-1}+\tau J(x_{n-1}) v_{i,n-1}+\tau v_{i,n-1}^\top F(x_{n-1})x_{n-1}      \\[0.05in]
\ds\qquad\qquad-\tau v_{i,n-1}^\top J(x_{n-1})^\top  x_{n-1} x_{n-1}+O(\tau^2)\\[0.05in]
\ds\quad~\,=:v_{i,n-1}+\mathcal L_{i,n-1}+O(\tau^2),\\[0.05in]
\ds \hspace{-0.2in}\mathcal L_{i,n-1}:=\tau J(x_{n-1}) v_{i,n-1}+\tau v_{i,n-1}^\top F(x_{n-1})x_{n-1} \\[0.05in]
\ds\qquad\qquad-\tau v_{i,n-1}^\top J(x_{n-1})^\top  x_{n-1} x_{n-1}.
\end{array} \end{equation}
We invoke this equation in the last equation of (\ref{cFDsadk}) to obtain
\begin{equation}\label{vz}
\begin{array}{l}
\ds v_{i,n}=\frac{1}{Z_{i,n}}\bigg(v_{i,n-1}+\mathcal L_{i,n-1}+O(\tau^2)\\[0.15in]
\ds\qquad\qquad-\sum_{j=1}^{i-1} v_{j,n}^\top \big(v_{i,n-1}+\mathcal L_{i,n-1}+O(\tau^2)\big) v_{j,n}\bigg)\\
\ds\quad~\,=\frac{1}{Z_{i,n}}\bigg(v_{i,n-1}-\sum_{j=1}^{i-1} (v_{j,n}-v_{j,n-1})^\top v_{i,n-1} v_{j,n}+O(\tau)\bigg),
\end{array}
\end{equation}
which implies
\begin{equation}
\begin{array}{l}
\ds v_{i,n}-v_{i,n-1}\\
\ds\qquad=\frac{1}{Z_{i,n}}\bigg(v_{i,n-1}(1-Z_{i,n})-\sum_{j=1}^{i-1} (v_{j,n}-v_{j,n-1})^\top v_{i,n-1} v_{j,n}+O(\tau)\bigg).
\end{array}
\end{equation}
We also employ (\ref{cvhv}) to expand $Z_{i,n}$ as
\begin{equation}\label{ZZ}
\begin{array}{l}
\ds Z_{i,n}=\bigg(1+2\tau v_{i,n-1}^\top J(x_{n-1})v_{i,n-1}-\sum_{j=1}^{i-1} \big( v_{j,n}^\top v_{i,n-1} \big)^2+O(\tau^2)\bigg)^{1/2}\\
\ds\qquad=\bigg(1+2\tau v_{i,n-1}^\top J(x_{n-1})v_{i,n-1}\\
\ds\qquad\qquad-\sum_{j=1}^{i-1} \big( (v_{j,n}-v_{j,n-1})^\top v_{i,n-1} \big)^2+O(\tau^2)\bigg)^{1/2}\\
\ds\qquad=\bigg(1-\sum_{j=1}^{i-1} \big( (v_{j,n}-v_{j,n-1})^\top v_{i,n-1} \big)^2+O(\tau)\bigg)^{1/2}.
\end{array}
\end{equation}
Based on these equations we could follow the proof of Lemma \ref{lemvv} to prove that
\begin{equation}\label{cv-v}
\|v_{i,n}-v_{i,n-1}\|\leq Q\tau,~~1\leq i\leq k,~~1\leq n\leq N.
\end{equation}

\paragraph{Step 3: Derivation of the second equation in (\ref{thmceq})}
We invoke the estimate (\ref{cv-v}) back to the second equality of (\ref{ZZ}) to get
\begin{equation}\label{cZ1}
\ds Z_{i,n}=\big(1+2\tau v_{i,n-1}^\top J(x_{n-1})v_{i,n-1}+O(\tau^2)\big)^{1/2},
\end{equation}
which, together with (\ref{split}) and (\ref{YY}), implies
$$\frac{1}{Z_{i,n}}=1-\tau v_{i,n-1}^\top J(x_{n-1})v_{i,n-1}+O(\tau^2). $$
We replace $1/Z_{i,n}$ in the first equality of (\ref{vz}) by this equation to get
\begin{equation}\label{vz2}
\begin{array}{l}
\ds v_{i,n}=v_{i,n-1}+\mathcal L_{i,n-1}-\sum_{j=1}^{i-1} v_{j,n}^\top \big(v_{i,n-1}+\mathcal L_{i,n-1}\big) v_{j,n}\\[0.2in]
\ds\qquad\quad-\tau v_{i,n-1}^\top J(x_{n-1})v_{i,n-1}v_{i,n-1}\\[0.05in]
\ds\qquad\quad+\tau\sum_{j=1}^{i-1} v_{j,n}^\top v_{i,n-1} (v_{i,n-1}^\top J(x_{n-1})v_{i,n-1}) v_{j,n}+O(\tau^2).
\end{array}
\end{equation}
By (\ref{cv-v}), the last-but-one right-hand side term of (\ref{vz2}) could be estimated as
$$\begin{array}{l}
\ds \tau\sum_{j=1}^{i-1} v_{j,n}^\top v_{i,n-1} (v_{i,n-1}^\top J(x_{n-1})v_{i,n-1}) v_{j,n}\\
\ds\quad=\tau\sum_{j=1}^{i-1} (v_{j,n}-v_{j,n-1})^\top v_{i,n-1} (v_{i,n-1}^\top J(x_{n-1})v_{i,n-1}) v_{j,n}\\[0.175in]
\ds\quad=O(\tau^2),
\end{array} $$
and we reformulate the first summation on the right-hand side of (\ref{vz2}) as
\begin{equation}\label{LL} \begin{array}{l}
\ds -\sum_{j=1}^{i-1} v_{j,n}^\top \big(v_{i,n-1}+\mathcal L_{i,n-1}\big) v_{j,n}\\
\ds\qquad=-\sum_{j=1}^{i-1} (v_{j,n}-\hat v_{j,n})^\top v_{i,n-1} v_{j,n}-\sum_{j=1}^{i-1} \hat v_{j,n}^\top v_{i,n-1} v_{j,n}\\
\ds\qquad~~\,-\sum_{j=1}^{i-1} (v_{j,n}-v_{j,n-1})^\top \mathcal L_{i,n-1} v_{j,n}-\sum_{j=1}^{i-1} v_{j,n-1}^\top \mathcal L_{i,n-1} v_{j,n}\\[0.175in]
\qquad\ds=:\sum_{l=1}^4 B_l.
\end{array} \end{equation}
To bound $B_1$, we apply the last equation of (\ref{cFDsadk}) to get
$$\begin{array}{l}
\ds v_{j,n}-\hat v_{j,n}=\frac{1-Z_{j,n}}{Z_{j,n}}\hat v_{j,n}-\frac{1}{Z_{j,n}}\sum_{l=1}^{j-1}\hat v_{j,n}^\top v_{l,n} v_{l,n},
\end{array} $$
which implies
\begin{equation}\label{cvtv} \begin{array}{l}
\ds (v_{j,n}-\hat v_{j,n})^\top v_{i,n-1}=\frac{1-Z_{j,n}}{Z_{j,n}}\hat v_{j,n}^\top v_{i,n-1}-\frac{1}{Z_{j,n}}\sum_{l=1}^{j-1}\hat v_{j,n}^\top v_{l,n} v_{l,n}^\top v_{i,n-1}.
\end{array} \end{equation}
From (\ref{cZ1}) we find that $1-Z_{j,n}$ is an $O(\tau)$ term, and $\hat v_{j,n}^\top v_{i,n-1}$ could be expanded as
\begin{equation}\label{cs1}
\begin{array}{l}
\ds \hat v_{j,n}^\top v_{i,n-1}=(\tilde v_{j,n}-\tilde v_{j,n}^\top x_n x_n)^\top v_{i,n-1}\\[0.05in]
\ds\quad= (v_{j,n-1}+\tau J(x_{n-1}) v_{j,n-1})^\top v_{i,n-1}\\[0.05in]
\ds\qquad -\tilde v_{j,n}^\top x_n x_n^\top (v_{i,n-1}-v_{i,n})\\[0.05in]
\ds\quad= (\tau J(x_{n-1}) v_{j,n-1})^\top v_{i,n-1} \\[0.05in]
\ds\qquad-\tilde v_{j,n}^\top x_n x_n^\top (v_{i,n-1}-v_{i,n})=O(\tau).
\end{array}
\end{equation}
Thus the first right-hand side term of (\ref{cvtv}) is $O(\tau^2)$. The second right-hand side term of (\ref{cvtv}) could be reformulated as
\begin{equation}\label{hg} \begin{array}{l}
\ds -\frac{1}{Z_{j,n}}\sum_{l=1}^{j-1}\hat v_{j,n}^\top v_{l,n} v_{l,n}^\top v_{i,n-1}\\[0.15in]
\ds\qquad=-\frac{1}{Z_{j,n}}\sum_{l=1}^{j-1}\hat v_{j,n}^\top (v_{l,n}-v_{l,n-1}) v_{l,n}^\top (v_{i,n-1}-v_{i,n})\\[0.15in]
\ds\qquad\quad-\frac{1}{Z_{j,n}}\sum_{l=1}^{j-1}\hat v_{j,n}^\top v_{l,n-1} v_{l,n}^\top (v_{i,n-1}-v_{i,n}).
\end{array} \end{equation}
By (\ref{cv-v}) the first right-hand side term of this equation is an $O(\tau^2)$ term, while, by a similar derivation as (\ref{cs1}), the factor $\hat v_{j,n}^\top v_{l,n-1}$ in the second right-hand side term of this equation is an $O(\tau)$ term, which implies that the second right-hand side term of (\ref{hg}) and thus (\ref{cvtv}) are $O(\tau^2)$. Consequently, $B_1$ in (\ref{LL}) is $O(\tau^2)$. 

To estimate $B_2$, we find that 
$$\begin{array}{l}
\ds \tilde v_{j,n}^\top x_n=(v_{j,n-1}+\tau J(x_{n-1}) v_{j,n-1})^\top x_n\\[0.05in]
\ds\qquad\qquad=(v_{j,n-1}-v_{j,n})^\top x_n+\tau ( J(x_{n-1}) v_{j,n-1})^\top x_n=O(\tau),
\end{array}  $$
which, together with the third inequality of (\ref{cs1}), implies
\begin{equation*}
\ds \hat v_{j,n}^\top v_{i,n-1}= (\tau J(x_{n-1}) v_{j,n-1})^\top v_{i,n-1} +O(\tau^2).
\end{equation*}
Thus we could rewrite $B_2$ as
$$\begin{array}{l}
\ds B_2=-\sum_{j=1}^{i-1} (\tau J(x_{n-1}) v_{j,n-1})^\top v_{i,n-1}  v_{j,n}+O(\tau^2)\\[0.15in]
\ds\quad~=-\sum_{j=1}^{i-1} (\tau J(x_{n-1}) v_{j,n-1})^\top v_{i,n-1}  v_{j,n-1}\\[0.15in]
\ds\qquad-\sum_{j=1}^{i-1} (\tau J(x_{n-1}) v_{j,n-1})^\top v_{i,n-1} (v_{j,n}- v_{j,n-1})+O(\tau^2)\\[0.15in]
\ds\quad~=-\sum_{j=1}^{i-1} (\tau J(x_{n-1}) v_{j,n-1})^\top v_{i,n-1}  v_{j,n-1}+O(\tau^2).
\end{array}  $$
$B_3$ is clearly an $O(\tau^2)$ term, and we expand $B_4$ as
 $$\begin{array}{l}
 \ds B_4=-\sum_{j=1}^{i-1} v_{j,n-1}^\top \big(\tau J(x_{n-1}) v_{i,n-1}+\tau v_{i,n-1}^\top F(x_{n-1})x_{n-1} \\[0.2in]
\ds\qquad\qquad-\tau v_{i,n-1}^\top J(x_{n-1})^\top  x_{n-1} x_{n-1}\big) v_{j,n}\\[0.05in]
\ds\quad~=-\sum_{j=1}^{i-1} v_{j,n-1}^\top \tau J(x_{n-1}) v_{i,n-1}v_{j,n}\\[0.2in]
\ds\quad~=-\sum_{j=1}^{i-1} v_{j,n-1}^\top \tau J(x_{n-1}) v_{i,n-1}v_{j,n-1}\\[0.2in]
\ds\qquad\qquad-\sum_{j=1}^{i-1} v_{j,n-1}^\top \tau J(x_{n-1}) v_{i,n-1}(v_{j,n}-v_{j,n-1})\\[0.2in]
\ds\quad~=-\sum_{j=1}^{i-1} v_{j,n-1}^\top \tau J(x_{n-1}) v_{i,n-1}v_{j,n-1}+O(\tau^2).
\end{array}   $$
Invoking the estimates of $B_1-B_4$ in (\ref{LL}) leads to
$$-\sum_{j=1}^{i-1} v_{j,n}^\top \big(v_{i,n-1}+\mathcal L_{i,n-1}\big) v_{j,n}=-2\tau\sum_{j=1}^{i-1} v_{j,n-1}^\top  J(x_{n-1}) v_{i,n-1}v_{j,n-1}+O(\tau^2), $$
and we incorporate this equation with (\ref{vz2}) to obtain the second equation of (\ref{thmceq}), which completes the proof.
\end{proof}

Based on this theorem, the error equations could be generated by subtracting the reference equations of (\ref{csadk}) from (\ref{thmceq}), which, together with the conventional numerical analysis method for systems of differential equations, lead to the following error estimate of the numerical scheme (\ref{cFDsadk}).
\begin{theorem}
Under the Assumption A, the following estimate holds for the numerical scheme (\ref{cFDsadk}) for $\tau$ sufficiently small 
$$\|x_{n}-x(t_n)\|+\sum_{i=1}^k \|v_{i,n}-v_i(t_n)\|\leq Q\tau,~~1\leq n\leq N. $$
Here $Q$ is independent from $\tau$, $n$ and $N$.
\end{theorem}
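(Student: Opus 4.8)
The plan is to follow the route from Theorem \ref{thmvv} to Theorem \ref{thmevk}: the recovery identities (\ref{thmceq}) reduce the analysis to a standard consistency-plus-stability estimate for a coupled system of ordinary differential equations. First I would record the reference equations obtained by applying the Euler discretization to (\ref{csadk}),
$$x(t_n)=x(t_{n-1})+\tau\big(\mathcal S(t_{n-1})-x(t_{n-1})x(t_{n-1})^\top F(x(t_{n-1}))\big)+O(\tau^2),$$
together with the analogous expansion for $v_i(t_n)$ carrying the extra terms $-xx^\top J(x)v_i+xv_i^\top F(x)$; the $O(\tau^2)$ remainders are legitimate because $x(t)$ and $v_i(t)$ stay on the compact sphere $S^{d-1}$ (cf. (\ref{prop})) and Assumption A supplies the regularity of $F$ and $J$. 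Subtracting (\ref{thmceq}) from these yields error equations for $e^x_n:=x(t_n)-x_n$ and $e^{v_i}_n:=v_i(t_n)-v_{i,n}$ whose right-hand sides consist of $\tau$ times differences of the drift terms plus an $O(\tau^2)$ consistency term, the latter coming from the $O(\tau)$ in (\ref{thmceq}) multiplied by $\tau$.

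The key simplification, compared with the analysis in \cite{Z3}, is that no separate boundedness lemma is needed: the normalization and Gram-Schmidt steps in (\ref{cFDsadk}) force $\|x_n\|=1$ and $\|v_{i,n}\|=1$ exactly, and (\ref{prop}) gives $\|x(t_n)\|=\|v_i(t_n)\|=1$, so all iterates lie in a fixed compact set. On that set the maps $\mathcal S$, $\mathcal R_i$, $\mathcal L_i$ and the quadratic projection terms $xx^\top F(x)$, $xx^\top J(x)v_i$, $xv_i^\top F(x)$ are, by the Lipschitz and linear-growth bounds of Assumption A, Lipschitz in $(x,v_1,\dots,v_k)$ with a uniform constant. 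Hence each drift difference is bounded by $Q\big(\|e^x_{n-1}\|+\sum_{i=1}^k\|e^{v_i}_{n-1}\|\big)$, and the coupling of $v_i$ to $v_j$ for $j<i$ that would otherwise arise from the Gram-Schmidt structure has already been absorbed into (\ref{thmceq}).

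Finally I would set $\varepsilon_n:=\|e^x_n\|+\sum_{i=1}^k\|e^{v_i}_n\|$, combine the individual bounds into $\varepsilon_n\le(1+Q\tau)\varepsilon_{n-1}+Q\tau^2$ with $\varepsilon_0=0$, and invoke the discrete Grönwall inequality to conclude $\varepsilon_n\le Q\tau$ uniformly for $1\le n\le N$, with $Q$ depending only on $T$, $L$ and $k$. The main obstacle is not any single estimate but ensuring that the consistency error in (\ref{thmceq}) is genuinely $O(\tau)$ uniformly in $n$ and $i$ and that all implied constants are independent of $\tau$ and $N$ — which is precisely what Theorem \ref{thmc} delivers — so once the recovery identities are in hand the remainder is the routine argument of \cite{BaoCao}, which I would state rather than reproduce in full.
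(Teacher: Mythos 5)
Your proposal is correct and follows essentially the same route as the paper, which likewise derives the error equations by subtracting the recovered scheme (\ref{thmceq}) from the Euler-expanded reference equations of (\ref{csadk}) and then appeals to the standard consistency-plus-stability/Gr\"onwall argument for systems of ODEs. The paper only sketches this step, so your explicit treatment of the Lipschitz bounds on the compact manifold and the discrete Gr\"onwall closure fills in exactly the routine details the authors defer to \cite{BaoCao}.
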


\section{Concluding remarks}
In this paper we analyze an efficient discrete HiSD scheme, which drop the Lagrangian multiplier terms in HiSD and instead perform an additional Gram-Schmidt orthonormalization to ensure the Stiefel manifold constraint. We recover the same form as HiSD from this scheme, which not only generates error estimates naturally, but indicates that the mechanism of Stiefel manifold preservation in HiSD is nearly a Gram-Schmidt process. The developed methods are further extended to analyze the more complicated constrained HiSD on high-dimensional sphere, which reveal that the mechanisms of the constrained HiSD on preserving several manifold properties are close to simple operations such as the vector normalization, the vector transport and the Gram-Schmidt orthonormalization. These results reveal mechanisms of the HiSD and constrained HiSD in preserving several manifold properties via numerical analysis.
   
There are several other potential extensions of the current work that deserve further exploration. For instance, one could apply the projection method proposed in \cite[Example 4.6]{Haibook} instead of the Gram-Schmidt process in (\ref{FDsadk}) and (\ref{cFDsadk}) to retract the directional vectors back to the Stiefel manifold, which preserves the manifold property via the minimal adjustment. Specifically, let $\tilde V_n:=[\tilde v_{n,1},\cdots,\tilde v_{n,k}]\in\mathbb R^{n\times k}$, then the projection $V_n:=[v_{n,1},\cdots,v_{n,k}]$ could be determined by minimizing the Frobenius norm of the difference $\tilde V_n-V_n$ within the Stiefel manifold, i.e.
\begin{equation}\label{min}
\text{min }\|\tilde V_n-V_n\|_F~~\text{ subject to }V_n^\top V_n=I. \end{equation}
In practice, one could compute the singular value decomposition
$\tilde V_n=U^\top \Sigma W$ and then the solution to (\ref{min}) is $V_n=U^\top W$. Thus it is natural to consider to what extend the application of the projection method deviates the dynamics of the HiSD, or the possibility of designing a new form of HiSD whose mechanism of preserving the Stiefel manifold is nearly the projection method. As the solution of (\ref{min}) does not have a clear form as the Gram-Schmidt process, more investigations are required to analyze these questions.

Furthermore, the ideas and techniques could be employed and improved to perform numerical analysis for HiSD constrained by $m$ equalities \cite[Equation 24]{CHiSD2021}
\begin{equation}\label{ccsd}
\left\{
\begin{array}{l}
\ds \frac{dx}{dt} =\mathcal S(t),\\[0.075in]
\ds \frac{dv_i}{dt}=\bigg(I-v_iv_i^\top-2\sum_{j=1}^{i-1}v_jv_j^\top\bigg)\mathcal H(x)[v_i] \\
\ds\qquad\qquad-A(x)\big(A(x)^\top A(x)\big)^{-1}\bigg(\nabla^2c(x)\frac{dx}{dt}\bigg)^\top v_i,~~1\leq i\leq k.
\end{array}
\right.
\end{equation}
Here $c(x)=(c_1(x),\cdots,c_m(x))=0$ represents the $m$ equality constraints and $A(x)=(\nabla c_1(x),\cdots,\nabla c_m(x))$. The constrained HiSD (\ref{csadk}) is a special case of (\ref{ccsd}) with one equality constraint
$$c_1(x)=\|x\|^2-1=0. $$
In this generalized constrained HiSD (\ref{ccsd}), $\mathcal H(x)$ refers to the Riemannian Hessian \cite{CHiSD2021}, which is difficult to compute and approximate in practice. Furthermore, compared with (\ref{csadk}), additional complicated terms appear on the right-hand side of (\ref{ccsd}). These bring additional difficulties for the numerical analysis that we will investigate in the near future.

\section*{Acknowledgements}
This work was partially supported by the National
Natural Science Foundation of China (No.
12288101, 12225102, T2321001,  12301555),
the Taishan Scholars Program of Shandong
Province (No. tsqn202306083), the National Key R\&D Program of China (No. 2023YFA1008903).

          \end{document}